\numberwithin{equation}{section}
\newtheorem{theorem}{Theorem}[section]
\newtheorem{proposition}[theorem]{Proposition}
\newtheorem{lemma}[theorem]{Lemma}
\newtheorem{remark}[theorem]{Remark}
\newtheorem{definition}[theorem]{Definition}
\newcommand{\Pl}{\mathbb P}
\newcommand{\E}{ \mathbb E}
\def\E{\mathbb{E}}
\def\Var{\mathrm{Var}}
\def\Cov{\mathrm{Cov}}
  \colorlet{greencolor}{green!50!black}
  \colorlet{textcolor}{red}
  \colorlet{tancolor}{orange!80!black}
  \colorlet{bluecolor}{blue}
\definecolor{plotcolor1}{rgb}{0,0.447,0.741}
\definecolor{plotcolor2}{rgb}{0.741,0,0.447}
\definecolor{plotcolor3}{rgb}{0,0.741,0.294}
\definecolor{plotcolor4}{rgb}{0.741,0.294,0}
\definecolor{plotcoloraux}{rgb}{0.447,0.447,0.447}
\tikzset{plotstyle1/.style={color=plotcolor1,solid,line width=1.0pt}}
\tikzset{plotstyle2/.style={color=plotcolor2,densely dashed,line width=1.0pt}}
\tikzset{plotstyle3/.style={color=plotcolor3,dotted,line width=1.0pt}}
\tikzset{plotstyle4/.style={color=plotcolor4,loosely dashed,line width=1.0pt}}
\tikzset{auxlines/.style={color=plotcoloraux,solid,line width=0.5pt}}
\newcommand{\markersize}{0.7pt}
\tikzset{discretemarkers/.style={mark=*,mark options={solid},mark size=\markersize}}
\def\VR{\kern-\arraycolsep\strut\vrule &\kern-\arraycolsep}
\def\vr{\kern-\arraycolsep & \kern-\arraycolsep}
\title{Markovian Transition Counting Processes: An Alternative to Markov Modulated Poisson Processes}
\author{Azam Asanjarani\footnote{The University of Auckland.}, 
Sophie Hautphenne\footnote{The University of Melbourne.}, 
Yoni Nazarathy\footnote{The University of Queensland.}.}
\date{ }
\begin{document}
\maketitle

\begin{abstract}
Stochastic models for performance analysis, optimization and control of queues hinge on a multitude of alternatives for input point processes. In case of bursty traffic, one very popular model is the \textit{Markov Modulated Poisson Process} (MMPP), however it is not the only option. 
Here, we  introduce an alternative
that we call \textit{Markovian transition counting process} (MTCP). The latter is  a point process counting the number of transitions of a finite continuous-time Markov chain.  

For a given MTCP one can establish an MMPP with the same first and second moments of counts. In this paper, we show the other direction by establishing a duality in terms of first and second moments of counts between MTCPs and a rich class of MMPPs which we refer to as slow MMPPs (modulation is slower than the events). 
Such a duality confirms the applicability of the MTCP as an alternative to the MMPP which is superior when it comes to moment matching and finding the important measures of the inter-event process.
We illustrate the use of such equivalence
in a simple queueing example, showing that the MTCP is
a comparable and competitive model for performance analysis.

\end{abstract}

\section{Introduction}
The term ``bursty traffic'' is often used in networking, traffic engineering and service systems to describe arrival patterns where at certain times there are numerous frequent arrivals and at other times there are none.  
While to a naive observer the standard Poisson process may also appear to exhibit such behavior, when referring to ``bursty traffic'' one typically means that the process is even ``more bursty'' than the Poisson process. There have been attempts to quantify and characterize the ``level of burstiness'' of a process as in \cite{neuts1993burstiness} and later in \cite{he1997episodic}, but there is no fully agreed upon definition. Instead, one often relies on specific models such as the Markov modulated Poisson process (MMPP), \cite{fischer1993markov}. The latter is a popular ``bursty traffic'' model because it has an asymptotic variance rate greater than unity (exceeds the Poisson process) and is also believed to have a squared coefficient of variation greater than unity, \cite{asanjarani2019stationary}. MMPPs can easily be integrated within stochastic models as well as network approximation schemes, \cite{kim2011modeling}. Hence to date, they have served as an attractive model for bursty traffic.

While the MMPP is popular, one may wish to find alternative models with comparable characteristics. One general option is to search within the class of \textit{Markovian arrival processes} (MAPs), of which the MMPP is a special case. See e.g. \cite{asmussen2003applied} Ch XI. In doing so, it may be interesting to find models that agree with certain attributes of a specified MMPP. Having such an alternative set of models can allow one to (i) Compare different (yet similar) models; (ii) Use the alternative models as ensembles similar to ensemble learning in a machine-learning context; (iii) Improve network decomposition schemes (such as \cite{kim2011modeling}).  In view of (i), (ii) and (iii) we present a simple yet straightforward alternative to MMPPs:  \textit{Markovian transition counting processes} (MTCPs). Our focus is on second order equivalence between a broad class of MMPPs that we call {\em slow MMPPs} and our MTCPs. From a modelling perspective and from the perspective of (i), (ii) and (iii), we argue that MTCP models can be just as useful as MMPPs. This paper aims to illustrate that.

In general, treating MAPs as {\em stationary} often yields a useful mathematical perspective which matches scenarios when there is no known dependence on time. In describing a point process we use  $N(t)$ to denote the number of events during $[0,t]$ and further use the sequence $\{T_n\}$ to denote the sequence of inter-event times. Two notions of stationarity are useful in this respect. Roughly, a MAP is {\em time-stationary} if the distribution of the number of events within a given interval does not depend on the location of the interval; that is if $N(t_1+s)-N(t_1)$ is distributed as $N(t_2+s)-N(t_2)$ for any non-negative $t_1, t_2$ and $s$. A MAP is {\em event-stationary} if the joint distribution of $T_{k_1},\ldots,T_{k_n}$ is the same as that of $T_{k_1+\ell},\ldots,T_{k_n+\ell}$ for any integer sequence of indices $k_1, \ldots,k_n$ and any integer shift $\ell$. For a given model of a point process, one may often consider either the event-stationary or the time-stationary case. The probability laws of both cases agree in the case of the Poisson process. However, this is not true in general. 

A common way to parameterise MAPs is by considering the generator, $Q$, of an irreducible finite state CTMC and setting $Q= C + D$. Roughly speaking, the matrix $C$ determines state transition rates without event counts and the matrix $D$ determines the state transition rates associated with event counts.  Here we consider two classes of MAPs as alternative models. The Markov modulated Poisson process (MMPP)  which is a natural generalization of the Poisson process and has a diagonal matrix $D$ and a Markovian transition counting process (MTCP) which has a diagonal matrix $C$ and  the diagonal elements of $D$ are all zeros.
%
%
%

When observing their counting process, $N(\cdot)$, in the MMPP setting there is no indication of jump times  in the background CTMC; but as opposed to that, in the MTCP setting, the background CTMC jumps exactly every time $t$ when $N(t)$ is incremented. This potentially makes MTCPs more attractive. 
%
   We can consider the jump chain
of an MTCP as a standard discrete-time \textit{hidden Markov model} (HMM), where the observations are the exponential random variables of sojourn times in states, and  allows us to apply the forward-backward algorithm for MTCPs' parameter estimation. Note that considering general MMPPs as standard HMMs requires more complicated
manipulations; see for instance \cite{scott2003markov}. 
%

A further virtue of MTCPs 
 is that  having a diagonal  matrix $C$ makes certain algebraic quantities regarding the inter-event time process easier to compute (typical quantities that one needs to compute are $C^{-k}$ or $e^{Cx}$ for some integer $k>0$ or real $x>0$).
%
  One of the state-of-the-arts  fitting tools for bursty data traces is the KPC-toolbox, see\cite{casale2010kpc}. The KPC-toolbox  is based on  the \textit{Kronecker product composition} (KPC) technique to generate MAPs  with predefined moments, autocorrelations, and higher order statistics in inter-event times. This technique benefits the  fact that the  Kronecker product of $k \, (k \in \mathbb{N})$  MAPs is a MAP if at least $k-1$ of them have diagonal matrix $C$,  see \cite{casale2010trace} for more details. The accurate fitting results of this method is another motive to carry on this research on  relations between MMPPs  and MTCPs.
  

In \cite{nazarathy2008asymptotic}, the authors show that for a given MTCP, one can find an MMPP with the same first and second moments of the counting process. But, can we do the converse?
Here, we consider this converse problem and show that for a wide class of MMPPs, it is possible to find MTCPs that match them exactly in terms of the mean and variance of $N(\cdot)$ for the time-stationary case.

In an early paper \cite{asanjarani2016queueing}, we handle this problem just for the case of two-state MMPPs and here, using matrix analytic methods, we generalise the solution to a $p$-state MMPP ($p \geq 2$). 
Further, for comparison, we apply moment matching to approximate  the inter-event process of an MMPP with its associated  MTCP.
 The numerical results of comparing the performance measures of the  MMPP/M/1 queue with the approximated  MTCP/M/1 queue confirm that  MTCPs can be considered as an alternative model for MMPPs.
In doing so, we compare two MTCP- based alternatives. The first is based on a detailed method of moments fit and the second is based on our simple approximated fit. We see that both methods are comparable.

The remainder of the paper is structured as follows. In Section \ref{section MAP} we overview and summarise the MAP results used in this paper. In Section~\ref{section:MMPP-MTCP} we focus on the relationships between MTCPs and MMPPs. Specifically, we present our main results for second order equivalence. We show  numerical results in Section \ref{section:data matching} and conclude in Section \ref{section:conclusion}.
Note that in this paper, Latin letters and bold notation is used for column vectors. Vectors of probabilities are row vectors and shown with bold Greek letters.

\section{Markovian Arrival Processes }
\label{section MAP}

A Markovian arrival process (MAP) is a mathematical model based on a Markov chain, used for modelling events occurring over time. A MAP of order $p$ (MAP$_p$) is generated by a two-dimensional Markov process  $\{(N(t), X(t)); t \geq 0\}$ on the state space $\{0,1, 2, \cdots\}\times \{1,2, \cdots, p\}$. The counting process $N(\cdot)$ counts  the number of  ``events''  in $[0,t]$ with  $\Pl(N(0)=0)=1$. The phase process $X(\cdot)$ is an irreducible CTMC with state space $\{1, \ldots, p\}$, initial distribution $\boldsymbol{\eta}$  and  generator matrix $Q$. A MAP is characterised by the parameters $(\boldsymbol{\eta}, C,D)$, where the matrix  $C$ has negative diagonal elements and non-negative off-diagonal elements, and records the  phase transition rates  which are not associated with an event. The matrix $D$ has non-negative elements and 
records the phase transition rates associated with an event
(increase of $N(t)$ by 1). Moreover, we have $Q=C+D$. More details are in \cite{asmussen2003applied} (Chapter~XI) and \cite{he2014fundamentals} (Chapter~2).
 
%
Since $Q$ is assumed irreducible and finite, it has a unique stationary distribution $\boldsymbol{\pi}$ satisfying $\boldsymbol{\pi} Q = \mathbf{0}'$, $\boldsymbol{\pi} \mathbf{1} = 1$, where $\mathbf{0}'$ is a row vector of 0's and $\mathbf{1}$ is a column vector of 1's. 
A MAP with parameters $(\boldsymbol{\eta}, C,D)$ is time-stationary \index{time-stationary MAP} if $\boldsymbol{\eta}=\boldsymbol{\pi}$.
 In the time-stationary  case, we have  (see \cite{asmussen2003applied}):
\begin{align}
\label{Eq:Mean}
\E[N(t)]&= {\boldsymbol{\pi}} D \mathbf{1}\,t,\\
\label{Eq:Var}
\Var\big(N(t)\big)&=\{{\boldsymbol{\pi}}D \mathbf{1}+2\, {\boldsymbol{\pi}}D D_Q^{\sharp} D \mathbf{1}\}\,t- 2 {\boldsymbol{\pi}}D D_Q^{\sharp} D_Q^{\sharp}(t) D \mathbf{1},\\
\label{Eq:third}
\E\left[N^3(t)\right]&=  6{\boldsymbol{\pi}}D \Big(\int_0^t\int_0^u e^{Q(u-s)}D \big(\mathbf{1} {\boldsymbol{\pi}} D \mathbf{1} s + D_Q^{\sharp}(s) D \mathbf{1} \,\Big)ds\,du,
\end{align}
 where $D_Q^{\sharp}(t)$ is the \textit{transient deviation matrix}, 
\begin{equation}
\label{Eq:deviationt}
D_Q^{\sharp}(t)=\int_0^{t}(e^{Qu}-\mathbf{1}{\boldsymbol{\pi}})\, du,
\end{equation}
 and $D_Q^{\sharp}$ is the \textit{deviation matrix}\index{deviation matrix}  defined by the following formula
\begin{equation} 
\label{Eq:deviation}
D_Q^{\sharp}=\lim_{t\rightarrow\infty} D^{\sharp}_Q(t)=\int_0^{\infty}(e^{Qu}-\mathbf{1}{\boldsymbol{\pi}})\, du.
\end{equation}
 Note that in some sources such as  \cite{asmussen2003applied},  the variance formula \eqref{Eq:Var}  is presented in terms of the fundamental  matrix $Q^{-}:=(\mathbf{1}{\boldsymbol{\pi}} - Q)^{-1}$.  For a given matrix $Q$, the relation between $Q^{-}$ and its deviation matrix is  $Q^{-}=D_Q^{\sharp} +\mathbf{1}{\boldsymbol{\pi}}$, see \cite{coolen2002deviation}.  
 The deviation matrix has the following properties:  
\begin{equation}\label{eq:dev}
D_Q^{\sharp}\mathbf{1}=\mathbf{0}, \quad {\boldsymbol{\pi}}D_Q^{\sharp}= \mathbf{0}, \quad \text{and} \quad D_Q^{\sharp}Q=QD_Q^{\sharp}=(\mathbf{1}{\boldsymbol{\pi}}-I),
\end{equation}
where the last one follows from the  fact that $\displaystyle \lim_{t \to \infty}e^{Qt}=\mathbf{1}\boldsymbol{\pi} $. The relation between the deviation matrix and the transient deviation matrix with the same generator matrix $Q$ is given by:
\begin{equation}\label{Eq:D-D(t)}
D_Q^{\sharp}(t)=D_Q^{\sharp}(I-e^{Qt}).
\end{equation}

 Of further interest is the embedded discrete-time Markov chain  of jump times with irreducible stochastic matrix $P = (-C)^{-1}D$ and stationary distribution $\boldsymbol{\alpha}$, where $\boldsymbol{\alpha} P=\boldsymbol{\alpha}$ and $\boldsymbol{\alpha} \mathbf{1}=1$. 
   The MAP is event-stationary
   if there is an event at time $t=0$ and $\boldsymbol{\eta}=\boldsymbol{\alpha}$. For an event-stationary MAP, the (generic) inter-event time is phase-type distributed, $PH(\boldsymbol{\alpha}, C)$ and thus has $k$-th moment:
\begin{equation}
M_k=\E[T_n^k]=k! \boldsymbol{\alpha} (-C)^{-k}\mathbf{1}
=k! \, \frac{1}{\lambda^*}( -\boldsymbol{\pi} C) (-C)^{-k} {\mathbf 1}=(-1)^{k+1} \, k! \, \frac{1}{\lambda^*} \boldsymbol{\pi} \big(C^{-1}\big)^{k-1} {\mathbf 1},
\end{equation}
where $\lambda^*= \pi D \mathbf{1}$.
Further, correlations are given by (see \cite{horvath2010joint} or \cite{horvath2013matching})
$$
\E[T_i^k \, T_{j}^l]=  k!\, l!\, \boldsymbol{\alpha}\,(-C)^{-k} P^{j-i}\,   (-C)^{-l}\mathbf{1}, \qquad j>i,
$$
 and when  the covariance  is normalised, we have  the lag-$j$ autocorrelation function
\begin{equation}\label{Eq:rho-j}
\rho_j=\frac{\Cov(T_0, T_{j})}{\Var(T_0)}=\frac{\E[T_0 \, T_j]-M_1^2}{M_2-M_1^2}.
\end{equation}
 
 The most well known type of MAP is the \textit{Markov modulated Poisson process} (MMPP). The MMPP can be considered as an arrival process which consists of a finite
number of different Poisson processes, modulated by a Markov process. In other words,
the MMPP is a particular case of doubly stochastic Poisson processes whose arrival rate is directed by transitions of a finite-state CTMC.
 So, in the standard MAP parameterization, $D$ is a diagonal matrix with entries $\boldsymbol{\lambda} = (\lambda_1,\ldots,\lambda_p)$. In practice, MMPPs are ideal for modelling bursty traffic. Examples of bursty processes are  in queues exhibiting rush hour traffic, in call centers or traffic light intersections.

Another example of a MAP is  the \textit{Markov transition counting process }(MTCP) which is a point process counting every transition of an irreducible  finite state CTMC. One can consider MMPPs and MTCPs as two extreme examples of MAPs. When there is an event in an MMPP, it never corresponds to a transition in the phase process, while when there is an event in an MTCP, it always corresponds to a transition in the phase process.
%
%
Therefore, the matrix $C$ of an MTCP, which records the phase transitions with no event, is diagonal and the diagonal elements of matrix $D$ are all zero. For an early reference that analyses both the MMPP and the MTCP (without using these terms, as such), see \cite{rudemo1973point}.


%


 It is easy to verify that the MTCP$_p$ is a natural generalisation of a Hyperexponential-$p$ renewal process (H$_p$-renewal process). Consider an MTCP$_p$ where the sojourn time in each phase $i$ (of the background CTMC) has an exponential distribution with parameter $\lambda_i$ for $i=1, \cdots, p$. The representation of an MTCP$_p$ as a MAP$_p$ is $
\boldsymbol{\pi}=(\pi_1, \cdots, \pi_p)$, $C=\text{diag} ( -\lambda_i)$, and the matrix $D$ has non-negative elements such that  $D\mathbf{1}=(\lambda_1, \cdots, \lambda_p)' $. Therefore, the distribution of inter-event times is $f(t)=\boldsymbol{\pi} e^{Ct} D\mathbf{1}=\sum_{i=1}^p\pi_i \lambda_i e^{-\lambda_i t}$ which is the density function of a H$_p$.   So, in an MTCP, times between events are H$_p$-distributed. But, they are not necessarily independent.
We also mention that the MTCP is a special case of the \textit{Markov switched Poisson process} (MSPP). The latter is a MAP where $C$ is diagonal however $D$ does not necessarily have a zero diagonal (see \cite{he2014fundamentals}). Further analysis dealing with MMPPs and MSPPs can be found in \cite{asanjarani2019stationary}.

\section{Matching MTCPs and MMPPs}\label{section:MMPP-MTCP}
In this section, we show relations between MTCPs and MMPPs. Proposition 3.2 of  \cite{nazarathy2008asymptotic} implies that every MTCP has an associated MMPP with the same first two  moments. We present this proposition in an alternative form here:
\begin{proposition}\label{prop:yoni}\cite{nazarathy2008asymptotic}
Let  $\bar{N}(t)$ be the counting process of a time-stationary MTCP$_p$. Then there is an MMPP$_p$, with  the counting process $\tilde{N}(t)$, such that their  first and second moments are matched. That is, for all $ t  \geq 0$,
$$
\E[\tilde{N}^k(t)]=\E[\bar{N}^k(t)]\,, \qquad \text{for} \quad k=1,2\,.
$$
\end{proposition}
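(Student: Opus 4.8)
The plan is to construct the matching MMPP$_p$ by keeping the \emph{same} modulating generator as the MTCP and merely relocating where the event rates sit. Write the time-stationary MTCP as $(\boldsymbol{\pi}, C, D)$ with $C = \text{diag}(-\lambda_1, \ldots, -\lambda_p)$, with $D$ having zero diagonal and $D\mathbf{1} = (\lambda_1, \ldots, \lambda_p)'$, and set $Q = C + D$. I would then define the candidate MMPP$_p$ by $(\boldsymbol{\pi}, \tilde{C}, \tilde{D})$ with $\tilde{D} = \text{diag}(\lambda_1, \ldots, \lambda_p)$ and $\tilde{C} = Q - \tilde{D}$, so that $\tilde{C} + \tilde{D} = Q$. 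First I would check that this is a legitimate, irreducible, time-stationary MMPP: the off-diagonal entries of $\tilde{C}$ equal those of $Q$ (hence those of $D$) and are non-negative, while $\tilde{C}_{ii} = Q_{ii} - \lambda_i = -2\lambda_i < 0$; since $\tilde{C} + \tilde{D} = Q$ is the same irreducible generator with the same stationary law $\boldsymbol{\pi}$, taking $\boldsymbol{\eta} = \boldsymbol{\pi}$ makes $\tilde{N}(\cdot)$ time-stationary.

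The heart of the argument is the observation that formulas \eqref{Eq:Mean} and \eqref{Eq:Var} depend on the MAP only through the generator $Q$ (via $\boldsymbol{\pi}$, $D_Q^{\sharp}$ and $D_Q^{\sharp}(t)$) and through the two aggregated vectors $\boldsymbol{\pi} D$ (on the left) and $D\mathbf{1}$ (on the right): every occurrence of $D$ in \eqref{Eq:Mean}--\eqref{Eq:Var} is either pre-multiplied by $\boldsymbol{\pi}$ or post-multiplied by $\mathbf{1}$, so that each quadratic term factors as $(\boldsymbol{\pi} D)\,(\text{matrix in } Q)\,(D\mathbf{1})$ with no copy of $D$ sandwiched between two $Q$-matrices. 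Since my construction takes $\tilde{Q} = Q$, the triple $(\boldsymbol{\pi}, D_Q^{\sharp}, D_Q^{\sharp}(t))$ is automatically shared by both processes, so it remains only to verify the two vector identities $\boldsymbol{\pi} D = \boldsymbol{\pi} \tilde{D}$ and $D\mathbf{1} = \tilde{D}\mathbf{1}$.

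The second identity is immediate from the construction: $\tilde{D}\mathbf{1} = (\lambda_1, \ldots, \lambda_p)' = D\mathbf{1}$. The first is the one genuinely nontrivial step, and it is exactly where the diagonal structure of the MTCP's $C$ is used: from $\boldsymbol{\pi} Q = \mathbf{0}'$ we get $\boldsymbol{\pi} D = -\boldsymbol{\pi} C = \boldsymbol{\pi}\,\text{diag}(\lambda_1, \ldots, \lambda_p) = (\pi_1 \lambda_1, \ldots, \pi_p \lambda_p)$, which coincides with $\boldsymbol{\pi}\tilde{D}$ because $\tilde{D}$ is the diagonal matrix of the same $\lambda_i$. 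With both vector identities in hand, substituting into \eqref{Eq:Mean} and \eqref{Eq:Var} gives $\E[\tilde{N}(t)] = \E[\bar{N}(t)]$ and $\Var(\tilde{N}(t)) = \Var(\bar{N}(t))$ for every $t \ge 0$; since the means agree and the second moment equals the variance plus the square of the mean, the case $k = 2$ follows as well.

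I expect the only real subtlety to be the identity $\boldsymbol{\pi} D = \boldsymbol{\pi}\tilde{D}$: recognising that ``moving the event rates onto the diagonal'' leaves the left-multiplied vector $\boldsymbol{\pi} D$ unchanged is precisely the structural coincidence that makes the construction succeed, and it relies essentially on $C$ being diagonal for the MTCP. Everything else reduces to bookkeeping in the moment formulas, together with the routine verification that $\tilde{C}$ is an admissible MAP matrix.
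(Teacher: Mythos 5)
Your proposal is correct and is essentially the paper's own proof: both keep the same generator $Q$, define the MMPP by $\tilde{D}=-\mathrm{diag}(Q)=\mathrm{diag}(\boldsymbol{\lambda})$, and reduce the claim to the two identities $\tilde{D}\mathbf{1}=\bar{D}\mathbf{1}$ and $\boldsymbol{\pi}\tilde{D}=\boldsymbol{\pi}\bar{D}$, which follow from $Q\mathbf{1}=\mathbf{0}$ and $\boldsymbol{\pi}Q=\mathbf{0}'$. Your explicit remark that every occurrence of $D$ in \eqref{Eq:Mean}--\eqref{Eq:Var} is adjacent to $\boldsymbol{\pi}$ or $\mathbf{1}$ simply spells out the step the paper leaves implicit.
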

\begin{proof}
Assume that the matrix $D$ of the MTCP$_p$ is given by $\bar{D}=Q-\text{diag}(Q)$, where $Q$ is the  generator matrix of the background CTMC. We can construct  an MMPP$_p$ with the same background CTMC by setting  $\tilde{D}=-\text{diag}(Q)$. From  \eqref{Eq:Mean} and \eqref{Eq:Var}, if we show that $\bar{D}\mathbf{1}=\tilde{D}\mathbf{1}$  and  ${\boldsymbol{\pi}} \bar{D}={\boldsymbol{\pi}}\tilde{D}$,  these processes have the same first two moments and the proof is completed. Since $Q\mathbf{1}=0$ and ${\boldsymbol{\pi}}Q =\mathbf{0}'$ the result follows.
\end{proof}
The proof shows that in order to construct an MMPP matching the first two moments with an MTCP with the same  generator matrix $Q$, we need to set $\tilde{D}= \text{diag} (\boldsymbol {\lambda})$ and $\tilde{C}=Q-\tilde{D}$,
where $\boldsymbol{\lambda} := (\lambda_1, \cdots, \lambda_p)^\prime = -\text{diag}(Q)$. 

Now the question is can we construct an MTCP whose first two moments match those of a given MMPP?  Based on the proof of the above proposition, the answer is positive for the special case of MMPPs where $\boldsymbol{\lambda}=-\text{diag}(\tilde{Q})$, i.e. $\lambda_i = \sum_{j\neq i} \tilde{q}_{ij}$. But this is a very restricted case since it does not leave any freedom with $\lambda_i$.
We now show that  for each instance of a class of MMPPs, where $\lambda_i >~\sum_{j\neq i} \tilde{q}_{ij}$  which we call ``slow MMPPs'', there is an associated MTCP whose counting process exhibits the same first and second moments.

\begin{definition}
A  \textbf{slow Markov modulated Poisson process (slow MMPP)}\index{slow MMPP} is an MMPP where the event rate in any phase $i$ is greater than the total rate of leaving  that phase, i.e. $\lambda_i > \sum_{j\neq i} q_{ij}$.
\end{definition}

\begin{figure}[h]
\centering
\begin{subfigure}[t]{.45\textwidth}
\vspace{-3.5cm}
  \centering
  \hspace{-90pt}
 \includegraphics[scale=1]{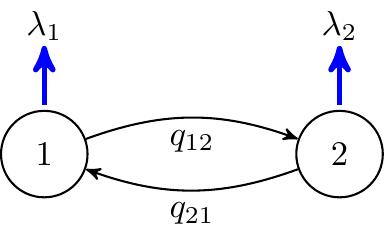}
 \vspace{1.1cm}
\caption{{ \small Transition diagram  of \\ the  phase process   of an MMPP$_2$.}} 
\end{subfigure}%
  \hspace{-50pt}
\begin{subfigure}[t]{.45\textwidth}
  \centering
 \includegraphics[scale=1]{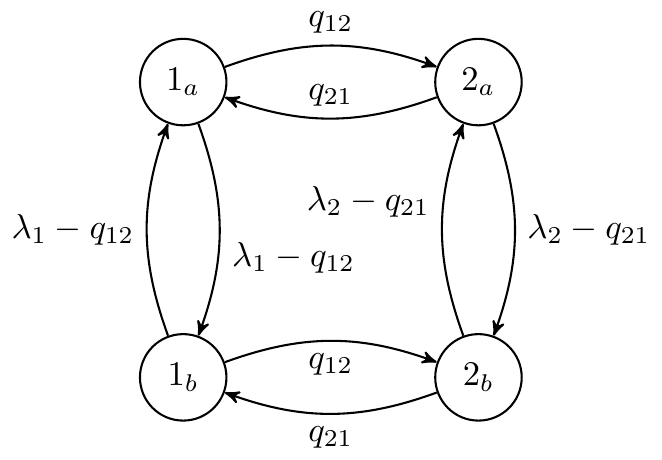}

\caption{{\small Transition diagram of the  phase process of the associated  MTCP$_4$.}} 
\end{subfigure}
\caption{{\small An MMPP$_2$ and its associated MTCP$_4$.}}
\label{Fig:Associate}
\end{figure}
We can associate an MTCP$_{2p}$ to any slow MMPP$_p$ as illustrated in Figure~\ref{Fig:Associate} for the case of $p=2$. 
As the figure shows, if the transition rate matrix  and the event intensity matrix of the slow MMPP$_2$ are given by
\begin{equation}\label{eq:Dmat}
\tilde{Q}=\left(\begin{array}{cc}
- q_{12} &  q_{12} \\
  q_{21}& -q_{21} 
\end{array}\right),
\quad
\tilde{D}=\left(\begin{array}{cc}
\lambda_1&  0 \\
  0& \lambda_2 
\end{array}\right),
\end{equation}
then, the corresponding matrices for the associated MTCP$_4$ are
\begin{equation}\label{Eq:MTCPQ}
\setlength{\arraycolsep}{0.02pt}
\bar{Q}=\bordermatrix{~&
1_a & 1_b & & 2_a & 2_b  \cr
1_a&-\lambda_1 & \lambda_1- q_{12} & \VR q_{12}& 0\cr
1_b & \lambda_1 -q_{12}&  -\lambda_1 & \VR 0 &  q_{12}\cr
\cline{2-6}
2_a &  q_{21} & 0 & \VR -\lambda_2  & \lambda_2-q_{21}\cr
2_b & 0 & q_{21}& \VR \lambda_2-q_{21} & -\lambda_2 \cr},
\quad\,\,
\bar{D}= \bar{Q}- \text{diag}(\bar{Q}).
\end{equation}
We can generalise this construction from $p=2$ to an arbitrary $p \geq 2$. Here, given a slow  MMPP$_p$, we construct an MTCP$_{2p}$ with the transition rate  and the event intensity matrices
\begin{equation}\label{Eq:generalQbar}
\bar{Q}=\left(\begin{array}{ccccc}
\bar{\Lambda}_{1} & H_{12}  & \cdots & H_{1p}\\

  H_{21} & \bar{\Lambda}_{2} & \cdots & H_{2p}\\
  \cdots & \cdots & \ddots &  \cdots\\
 H_{p1} & H_{p2}  & \cdots & \bar{\Lambda}_{p}
 \end{array}\right),
 \qquad
 \bar{D}=\left(\begin{array}{ccccc}
\bar{D}_{1} & H_{12} &  \cdots & H_{1p}\\

  H_{21} & \bar{D}_{2}  & \cdots & H_{2p}\\
 \cdots & \cdots & \ddots &  \cdots\\
  H_{p1} & H_{p2} &  \cdots & \bar{D}_{p}
 \end{array}\right),
\end{equation}
where  $\bar{\Lambda}_{i}\,$, $\bar{D}_{i}\,$, and $H_{ij}$ for $i \neq j$ and $i,j=1, \cdots, p$ are $2\times 2$ matrices given by
$$
\bar{\Lambda}_{i}=\left(\begin{array}{cc}
-\lambda_i & \lambda_{i}- S_i \\
  \lambda_{i}- S_i& -\lambda_i
 \end{array}\right),
 \qquad
\bar{D}_{i}= \left(\begin{array}{cc}
 0 & \lambda_{i}- S_i \\
  \lambda_{i}- S_i &0 
 \end{array}\right),
 \qquad
 H_{ij}=\left(\begin{array}{cc}
q_{ij}& 0 \\
 0 & q_{ij}
 \end{array}\right),
$$
where we use the notation $S_i=\sum_{j\neq i}q_{ij}$. Notice that $\bar{D}= \bar{Q}-\text{diag}(\bar{Q})$. 
 We  prove that  for an arbitrary order $p$ ($\geq 2$),  the counting processes of  the initial time-stationary slow MMPP$_p$ and its associated MTCP$_{2p}$  exhibit the same first and second moments. 
 
  In order to compare properties of the MMPP$_p$ and its associated  MTCP$_{2p}$,   we construct a MAP$_{2p}$ with the same counting process as the  MMPP$_p$. This is for  comparing processes with the same number of phases and is done by coupling the events of the phase process of the MMPP$_p$. When the process is in phase $k$, coupling events results in  a transition from phase $k_a$ to $k_b$ or vice versa. Figure~\ref{Fig:coupling} shows this for the case of $p=2$. This is nothing but a common modelling way to describe ``self- transitions'' in a CTMC. Note though that  the resulting MAP$_4$ is not an MTCP.\\ \\ \\

\begin{figure}[h]
 \centering
\begin{subfigure}[t]{.45\textwidth}
\vspace{-0.5cm}
  \centering
  \hspace{-90pt}
 \includegraphics[scale=1]{MMPP_2.pdf}
 \vspace{1.1cm}
 \subcaption{{\small Transition diagram of the phase process \\ of an MMPP$_2$.}} 
\end{subfigure}
\begin{subfigure}[t]{.45\textwidth}
\vspace{-1.5 cm}
  \centering
 \includegraphics[scale=1]{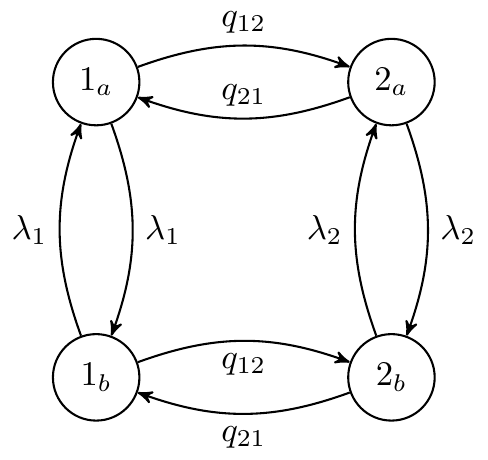}
 \subcaption{{\small Transition diagram of the phase process\\ of the coupled MAP$_4$.}} 
\end{subfigure}
\caption{{\small Construction of a MAP$_4$ from a given MMPP$_2$ by coupling.}} 
\label{Fig:coupling}
\end{figure}

We denote the phase transition matrix of the  resulting MAP$_{2p}$ by $\tilde{Q}$  and its event intensity  matrix by $\tilde{D}$. Then,   for the case of $p=2$, we have:
\begin{equation}\label{Eq:MAPQ}
\setlength{\arraycolsep}{0.02pt}
\tilde{Q}=\bordermatrix{~&
1_a & 1_b &  &2_a & 2_b  \cr
1_a &-(\lambda_1+ q_{12}) & \lambda_1& \VR q_{12}& 0\cr
1_b & \lambda_1 &  -(\lambda_1 + q_{12}) &\VR  0 & q_{12}\cr
\cline{2-6}
2_a & q_{21} & 0 &\VR -(\lambda_2 + q_{21}) & \lambda_2\cr
2_b & 0 & q_{21}& \VR \lambda_2 & -(\lambda_2 + q_{21})
\cr},
\end{equation}

\begin{equation}\label{Eq:MAPD}
 \setlength{\arraycolsep}{0.02pt}
\tilde{D}=\bordermatrix{~&
1_a & 1_b & & 2_a & 2_b  \cr
1_a &0 & \lambda_1&\VR 0& 0\cr
1_b & \lambda_1 &  0 &\VR 0 & 0\cr
\cline{2-6}
2_a & 0 & 0 &\VR 0 & \lambda_2\cr
 2_b &0 & 0&\VR \lambda_2 & 0
\cr}.
\end{equation}
Carrying out this process for an arbitrary value of $p$, the transition rate matrix and the event intensity matrix  are given by
\begin{equation}\label{Eq:generalQ}
\tilde{Q}=\left(\begin{array}{ccccc}
\tilde{\Lambda}_{1} & H_{12}  & \cdots & H_{1p}\\

  H_{21} & \tilde{\Lambda}_{2}  & \cdots & H_{2p}\\
  \cdots & \cdots & \ddots &  \cdots\\
 H_{p1} & H_{p2}  & \cdots & \tilde{\Lambda}_{p}
 \end{array}\right),
 \qquad 
\tilde{D}= \left(\begin{array}{ccccc}
\tilde{D}_{1} & 0 &  \cdots & 0\\

  0 & \tilde{D}_{2}  & \cdots & 0\\
  \cdots & \cdots & \ddots &  \cdots\\
 0 & 0 &  \cdots & \tilde{D}_{p}
 \end{array}\right),
\end{equation}
where  $\tilde{\Lambda}_{i}$ and $\tilde{D}_{i}$  are $2\times 2$ matrices given by
$$
\tilde{\Lambda}_{i}=\left(\begin{array}{cc}
-(\lambda_{i}+ S_i) & \lambda_i \\
 \lambda_i & -(\lambda_{i}+ S_i) 
 \end{array}\right),
 \qquad 
 \tilde{D}_{i}= \left(\begin{array}{cc}
 0 & \lambda_{i} \\
  \lambda_{i} &0 
 \end{array}\right).
 $$
Now, we can compare this MAP$_{2p}$ with the associated MTCP$_{2p}$.  Define  the transition rate matrix $G$ by
 $G_{ij}=q_{ij} $  for $i\neq j$,  and $ G_{ii}=-\sum_{j\neq i}q_{ij}$,
 and  denote a $p$-dimensional column vector of ones by $\mathbf{1}_p$. Then, we have the following lemma.

\begin{lemma}\label{lem:1}
  For all $ p\geq 2$:
\begin{enumerate}
\item[(i)] For any $k\geq 0$:\hspace{1cm}
$\left(I_p \otimes \mathbf{1}_2^\prime\right)\tilde{Q}^k= \left(I_p \otimes \mathbf{1}_2^\prime\right)\bar{Q}^k=G^k\left(I_p \otimes \mathbf{1}_2^\prime\right)$,

 and \hspace{2.8cm} $\tilde{Q}^k \left(I_p \otimes \mathbf{1}_2\right)= \bar{Q}^k \left(I_p \otimes \mathbf{1}_2\right)=\left(I_p \otimes \mathbf{1}_2\right)G^k$,
 
 where $\otimes$ denotes the Kronecker product.
\item[(i)$^{\prime}$] For any $t\geq 0$: \hspace{1cm}  $\left(I_p \otimes \mathbf{1}_2^\prime\right)e^{\tilde{Q}t}= \left(I_p \otimes \mathbf{1}_2^\prime\right)e^{\bar{Q}t}=e^{Gt}\left(I_p \otimes \mathbf{1}_2^\prime\right)$, 

 and \hspace{2.9cm}
$e^{\tilde{Q}t} \left(I_p \otimes \mathbf{1}_2\right)= e^{\bar{Q}t} \left(I_p \otimes \mathbf{1}_2\right)=\left(I_p \otimes \mathbf{1}_2\right)e^{Gt}$. 

\item[(ii)] Both $\tilde{Q}$ and $\bar{Q}$  have the same stationary distribution $\boldsymbol{\pi}$ which can be written as
$
\boldsymbol{\pi}=\frac{1} {2}\vartheta \left(I_p \otimes \mathbf{1}_2^\prime\right),
$
where $\vartheta$ is the stationary distribution of $G$.
\item[(iii)] ${\boldsymbol{\pi}} \tilde{D}={\boldsymbol{\pi}}\bar{D}=\frac{1}{2}\,{\vartheta}\,\text{diag}(\boldsymbol{\lambda}) \left(I_p \otimes \mathbf{1}_2^\prime\right)$, where $\boldsymbol{\lambda}=(\lambda_1, \cdots, \lambda_p)^\prime$.
\item[(iv)] $\tilde{D} \mathbf{1}_p= \bar{D} \mathbf{1}_p=\left(I_p \otimes \mathbf{1}_2\right)\boldsymbol{\lambda}$.
\item [(v)] $\left(I_p \otimes \mathbf{1}_2^\prime\right)D_{\tilde{Q}}^{\sharp}\left(I_p \otimes \mathbf{1}_2\right)= \left(I_p \otimes \mathbf{1}_2^\prime\right)D_{\bar{Q}}^{\sharp}\left(I_p \otimes \mathbf{1}_2\right)=2 D_G^{\sharp} $.
\item[(v)$^{\prime}$] $D_{\bar{Q}}^{\sharp}\left(I_p \otimes \mathbf{1}_2\right)=D_{\tilde{Q}}^{\sharp}\left(I_p \otimes \mathbf{1}_2\right)$, and 
$\left(I_p \otimes \mathbf{1}_2^\prime\right)D_{\bar{Q}}^{\sharp}=\left(I_p \otimes \mathbf{1}_2^\prime\right) D_{\tilde{Q}}^{\sharp}$.
\item[(vi)]
$D_{\bar{Q}}^{\sharp}\,\bar{D}\mathbf{1}_p=D_{\tilde{Q}}^{\sharp}\,\tilde{D}\mathbf{1}_p$, and $D_{\bar{Q}}^{\sharp}(t)\bar{D}\mathbf{1}_p=D_{\tilde{Q}}^{\sharp}(t)\tilde{D}\mathbf{1}_p$.
\item[(vii)] ${\boldsymbol{\pi}} \bar{D} D_{\bar{Q}}^{\sharp}= {\boldsymbol{\pi}} \tilde{D} D_{\tilde{Q}}^{\sharp}$.
\item[(viii)] $\left(I_p \otimes \mathbf{1}_2^\prime\right)D_{\tilde{Q}}^{\sharp}\, D_{\tilde{Q}}^{\sharp}(t)\left(I_p \otimes \mathbf{1}_2\right)= \left(I_p \otimes \mathbf{1}_2^\prime\right)D_{\bar{Q}}^{\sharp}\, D_{\bar{Q}}^{\sharp}(t)\left(I_p \otimes \mathbf{1}_2\right)=2 D_G^{\sharp}\, D_G^{\sharp}(t)$.
\end{enumerate}

\end{lemma}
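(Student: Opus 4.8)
The plan is to reduce the entire lemma to two facts: a handful of elementary Kronecker identities for the aggregation maps, and a single \emph{intertwining} (strong lumpability) relation shared by $\tilde Q$ and $\bar Q$. Write $U := I_p\otimes\mathbf 1_2^\prime$ (a $p\times 2p$ matrix) and $V := I_p\otimes\mathbf 1_2 = U^\prime$, so that $U$ sums the two coordinates of each phase-pair. The identities I would record first are $UV = 2I_p$, $V\mathbf 1_p = \mathbf 1_{2p}$, and $U\mathbf 1_{2p} = 2\mathbf 1_p$, all immediate from $\mathbf 1_2^\prime\mathbf 1_2 = 2$ and the mixed-product rule for $\otimes$.

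The heart of the argument is the claim $U\tilde Q = GU = U\bar Q$ and $\tilde Q V = VG = \bar Q V$, which I would prove by a block computation. The crucial structural observation is that for both matrices the off-diagonal blocks are $H_{ij}=q_{ij}I_2$ and each diagonal block ($\tilde\Lambda_i$ or $\bar\Lambda_i$) is symmetric with constant row and column sums equal to $-S_i$; consequently $\mathbf 1_2^\prime$ acting on the $(i,j)$ block returns $G_{ij}\mathbf 1_2^\prime$ and that block acting on $\mathbf 1_2$ returns $G_{ij}\mathbf 1_2$. Reading this blockwise gives exactly the four intertwining identities, i.e. both $\tilde Q$ and $\bar Q$ lump to $G$ under the pairing partition. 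Part (i) then follows by induction on $k$ (e.g. $U\tilde Q^{k+1}=(U\tilde Q)\tilde Q^k=GU\tilde Q^k=G^{k+1}U$, using $U\tilde Q^k=G^kU$), and part (i)$^\prime$ follows termwise from the exponential series.

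For the remaining parts I would first dispatch the ``static'' identities and then push the intertwining through the deviation matrices. Part (ii) is verified directly: $\tfrac12\vartheta U\tilde Q = \tfrac12\vartheta G U = 0$ and $\tfrac12\vartheta U\mathbf 1_{2p} = \vartheta\mathbf 1_p = 1$, so $\boldsymbol\pi = \tfrac12\vartheta U$ is the (unique, by irreducibility) stationary law of both $\tilde Q$ and $\bar Q$; this also gives $\boldsymbol\pi V = \vartheta$. Parts (iii) and (iv) are block computations: $U\tilde D = \mathrm{diag}(\boldsymbol\lambda)U$ and $U\bar D = (\mathrm{diag}(\boldsymbol\lambda)+G)U$, whence $\boldsymbol\pi\tilde D = \boldsymbol\pi\bar D = \tfrac12\vartheta\,\mathrm{diag}(\boldsymbol\lambda)U$ after using $\vartheta G = 0$; and $\tilde D\mathbf 1_{2p}=\bar D\mathbf 1_{2p}=V\boldsymbol\lambda$ since each block row of $\bar D$ sums to $\lambda_i\mathbf 1_2$. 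Next I would integrate part (i)$^\prime$: writing $D_Q^\sharp = \int_0^\infty(e^{Qu}-\mathbf 1\boldsymbol\pi)\,du$ and using $e^{\tilde Q u}V = Ve^{Gu}$, $\boldsymbol\pi V=\vartheta$, $\mathbf 1_{2p}=V\mathbf 1_p$, the integrand becomes $V(e^{Gu}-\mathbf 1_p\vartheta)$, giving $D_{\tilde Q}^\sharp V = VD_G^\sharp$; symmetrically $UD_{\tilde Q}^\sharp = D_G^\sharp U$, and the same two relations hold for $\bar Q$ and, integrating only to $t$, for the transient matrices $D_{\tilde Q}^\sharp(t)$, $D_{\bar Q}^\sharp(t)$.

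Parts (v)--(viii) are then pure assembly. Equality of the outer expressions in (v)$^\prime$ is immediate from $D_{\tilde Q}^\sharp V = VD_G^\sharp = D_{\bar Q}^\sharp V$ and $UD_{\tilde Q}^\sharp = D_G^\sharp U = UD_{\bar Q}^\sharp$; composing these and using $UV = 2I_p$ gives (v), and the same composition with a transient factor inserted gives (viii), namely $UD_{\tilde Q}^\sharp D_{\tilde Q}^\sharp(t)V=(D_G^\sharp U)(VD_G^\sharp(t))=2D_G^\sharp D_G^\sharp(t)$. Part (vi) combines (iv) with the deviation-matrix intertwining ($D_{\bar Q}^\sharp\bar D\mathbf 1_{2p}=D_{\bar Q}^\sharp V\boldsymbol\lambda = VD_G^\sharp\boldsymbol\lambda = D_{\tilde Q}^\sharp\tilde D\mathbf 1_{2p}$, and likewise for the transient version), and (vii) combines (iii) with $U D_{\bar Q}^\sharp = D_G^\sharp U = U D_{\tilde Q}^\sharp$. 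I expect the only genuinely delicate step to be the passage to the deviation matrices: there one must carry the $\mathbf 1\boldsymbol\pi$ term correctly through $U$ and $V$ (this is where $\boldsymbol\pi=\tfrac12\vartheta U$, $\boldsymbol\pi V = \vartheta$ and the factor $UV=2I_p$ enter, and where the factor $2$ in (v) and (viii) originates) and confirm that the intertwined integrand is integrable so that the interchange of $U,V$ with $\int_0^\infty$ is justified. Everything else is bookkeeping once the single lumpability relation $U\tilde Q=GU=U\bar Q$, $\tilde QV=VG=\bar QV$ is in hand.
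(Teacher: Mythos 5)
Your proposal is correct and follows essentially the same route as the paper's appendix proof: the block-structure/lumpability relations $U\tilde Q=GU=U\bar Q$ and $\tilde QV=VG=\bar QV$ established by inspection, induction for (i), the exponential series for (i)$^\prime$, direct verification of (ii)--(iv), and pushing the intertwining through the integrals defining $D_Q^{\sharp}$ and $D_Q^{\sharp}(t)$ using $UV=2I_p$, $\boldsymbol\pi=\tfrac12\vartheta U$ for (v)--(viii). The only (cosmetic) difference is that you derive the one-sided relations $UD_{\tilde Q}^{\sharp}=D_G^{\sharp}U$ and $D_{\tilde Q}^{\sharp}V=VD_G^{\sharp}$ first and obtain (v), (v)$^\prime$ and (viii) as corollaries, whereas the paper computes the two-sided sandwich in (v) directly and reads (v)$^\prime$ off from that computation.
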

\begin{proof}
The proof is in the Appendix.
\end{proof}
Having the above lemma, we can prove that there exists  an MTCP corresponding to a given slow MMPP with the same first two moments (but not necessarily the equivalent third moments). This proves the converse of Proposition~\ref{prop:yoni} for the class of slow MMPPs. 
 \begin{theorem}\label{Prop1}
Let $\tilde{N}(t)$ and $\bar{N}(t)$ be the counting processes of a time-stationary slow MMPP$_p$ and its associated MTCP$_{2p}$, respectively. Then, these processes have the same first and second moments. That is,  $\forall p \geq 2$ and $\forall t  \geq 0$,
$$
\E[\tilde{N}^k(t)]=\E[\bar{N}^k(t)]\,, \qquad  \text{for} \quad k=1,2\,.
$$
Further,  $\tilde{N}(t)$ and $\bar{N}(t)$  have different third moments. 
\end{theorem}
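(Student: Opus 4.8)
The plan is to feed the second-order MAP formulas \eqref{Eq:Mean}--\eqref{Eq:Var} into the two $2p$-dimensional representations — the coupled MAP$_{2p}$ with parameters $(\tilde{Q},\tilde{D})$, whose counting process is exactly $\tilde{N}$ by construction, and the MTCP$_{2p}$ with parameters $(\bar{Q},\bar{D})$, whose counting process is $\bar{N}$ — and to show that every scalar appearing in these formulas collapses, via Lemma~\ref{lem:1}, to the \emph{same} quantity built only from $G$, $\vartheta$ and $\boldsymbol{\lambda}$. First I would handle the mean: \eqref{Eq:Mean} gives $\E[\tilde{N}(t)]=\boldsymbol{\pi}\tilde{D}\mathbf{1}_{2p}\,t$ and $\E[\bar{N}(t)]=\boldsymbol{\pi}\bar{D}\mathbf{1}_{2p}\,t$, and since the two chains share the stationary vector $\boldsymbol{\pi}$ (part (ii)) and satisfy $\boldsymbol{\pi}\tilde{D}=\boldsymbol{\pi}\bar{D}$ (part (iii)), the first moments coincide.

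For the variance I would treat the three terms of \eqref{Eq:Var} separately. The rate term $\boldsymbol{\pi}D\mathbf{1}$ is the mean rate, already matched. For the remaining two I would substitute the collapse identities $\boldsymbol{\pi}D=\tfrac12\,\vartheta\,\text{diag}(\boldsymbol{\lambda})\left(I_p\otimes\mathbf{1}_2^\prime\right)$ (part (iii)) and $D\mathbf{1}_{2p}=\left(I_p\otimes\mathbf{1}_2\right)\boldsymbol{\lambda}$ (part (iv)) at the two ends, so that the middle factor becomes $\left(I_p\otimes\mathbf{1}_2^\prime\right)D_Q^{\sharp}\left(I_p\otimes\mathbf{1}_2\right)$ or $\left(I_p\otimes\mathbf{1}_2^\prime\right)D_Q^{\sharp}D_Q^{\sharp}(t)\left(I_p\otimes\mathbf{1}_2\right)$; parts (v) and (viii) evaluate these to $2D_G^{\sharp}$ and $2D_G^{\sharp}D_G^{\sharp}(t)$ for both $\tilde{Q}$ and $\bar{Q}$. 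Hence both variances reduce to the common expression $\{\lambda^*+2\,\vartheta\,\text{diag}(\boldsymbol{\lambda})D_G^{\sharp}\boldsymbol{\lambda}\}\,t-2\,\vartheta\,\text{diag}(\boldsymbol{\lambda})D_G^{\sharp}D_G^{\sharp}(t)\boldsymbol{\lambda}$, and since $\E[N^2(t)]=\Var(N(t))+(\E[N(t)])^2$ the second moments match as well. (Parts (vi)--(vii) give an equivalent route for the linear term.)

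For the third moment I would run the same collapse on \eqref{Eq:third}. Using part (i)$^{\prime}$ to push $e^{Q(u-s)}$ through $\left(I_p\otimes\mathbf{1}_2^\prime\right)$, together with parts (iii)--(iv) and the transient analogue $\left(I_p\otimes\mathbf{1}_2^\prime\right)D_Q^{\sharp}(s)\left(I_p\otimes\mathbf{1}_2\right)=2D_G^{\sharp}(s)$ (which follows from part (i)$^{\prime}$ and \eqref{Eq:deviationt}), every factor reduces to a $G$-quantity \emph{except} the inner $D$ sitting immediately after $e^{Q(u-s)}$. There the two models genuinely diverge: a block computation gives $\left(I_p\otimes\mathbf{1}_2^\prime\right)\tilde{D}=\text{diag}(\boldsymbol{\lambda})\left(I_p\otimes\mathbf{1}_2^\prime\right)$ but $\left(I_p\otimes\mathbf{1}_2^\prime\right)\bar{D}=\big(\text{diag}(\boldsymbol{\lambda})+G\big)\left(I_p\otimes\mathbf{1}_2^\prime\right)$, that is $\left(I_p\otimes\mathbf{1}_2^\prime\right)(\bar{D}-\tilde{D})=G\left(I_p\otimes\mathbf{1}_2^\prime\right)$. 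Carrying this through, all collapsed factors cancel and one is left with
\[
\E[\bar{N}^{3}(t)]-\E[\tilde{N}^{3}(t)]=6\,\vartheta\,\text{diag}(\boldsymbol{\lambda})\int_0^t\!\!\int_0^u e^{G(u-s)}G\big(\lambda^* s\,\mathbf{1}_p+D_G^{\sharp}(s)\boldsymbol{\lambda}\big)\,ds\,du.
\]
Because $G\mathbf{1}_p=\mathbf{0}$ the $\lambda^* s$ part drops, and \eqref{Eq:D-D(t)} together with $GD_G^{\sharp}=\mathbf{1}_p\vartheta-I_p$ (from \eqref{eq:dev}) gives $GD_G^{\sharp}(s)=e^{Gs}-I_p$, so the integrand simplifies to $\big(e^{Gu}-e^{G(u-s)}\big)\boldsymbol{\lambda}$.

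The remaining — and only genuinely delicate — step is to certify that this difference is not identically zero. I would do this by a small-$t$ expansion of the double integral, which to leading order yields $\E[\bar{N}^{3}(t)]-\E[\tilde{N}^{3}(t)]=t^3\,\vartheta\,\text{diag}(\boldsymbol{\lambda})\,G\,\boldsymbol{\lambda}+o(t^3)$, where $\vartheta\,\text{diag}(\boldsymbol{\lambda})G\boldsymbol{\lambda}=\sum_i\vartheta_i\lambda_i\sum_{j\neq i}q_{ij}(\lambda_j-\lambda_i)$ is nonzero whenever the event rates are not all equal; for $p=2$ it equals $-\frac{q_{12}q_{21}}{q_{12}+q_{21}}(\lambda_1-\lambda_2)^2<0$. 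This exhibits a whole family of slow MMPPs for which the third moments differ, establishing the final claim. I expect this nonvanishing argument to be the main obstacle: the first- and second-moment part is essentially bookkeeping once Lemma~\ref{lem:1} is in hand, whereas the third-moment statement requires both spotting that the ``middle'' $D$ is the one factor the lemma cannot collapse and then verifying that the resulting $G$-weighted integral really is nonzero rather than merely plausibly so.
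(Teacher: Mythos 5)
Your argument is correct and follows the same overall strategy as the paper's: collapse every scalar in \eqref{Eq:Mean}--\eqref{Eq:third} to a $G$-quantity via Lemma~\ref{lem:1}, and locate the third-moment discrepancy in the one factor $D$ that the lemma cannot collapse identically for the two models. For the first two moments your route is interchangeable with the paper's (you go through parts (iii)--(v) and (viii) where the paper invokes (iii), (vi) and (vii); both reduce the variance to $\{\vartheta\boldsymbol{\lambda}+2\,\vartheta\,\mathrm{diag}(\boldsymbol{\lambda})D_G^{\sharp}\boldsymbol{\lambda}\}t-2\,\vartheta\,\mathrm{diag}(\boldsymbol{\lambda})D_G^{\sharp}D_G^{\sharp}(t)\boldsymbol{\lambda}$). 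Where you genuinely differ is the third moment, and your version is tighter. The paper expands $e^{\bar{Q}v}$ as a power series, isolates the extra term \eqref{eq:differ} for each pair $(k,l)$, and cites Remark~\ref{re:Poi} to conclude each such term is nonzero off the Poisson case; strictly speaking this leaves open the possibility of cancellation between terms once the series is resummed and integrated. Your closed-form difference $6\,\vartheta\,\mathrm{diag}(\boldsymbol{\lambda})\int_0^t\int_0^u\bigl(e^{Gu}-e^{G(u-s)}\bigr)\boldsymbol{\lambda}\,ds\,du$ and its leading coefficient $t^3\,\vartheta\,\mathrm{diag}(\boldsymbol{\lambda})G\boldsymbol{\lambda}$ close that gap. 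The one line still missing is the strict negativity of this coefficient for general $p$ (you verify only $p=2$ and assert the rest): it follows from the identity $\sum_i\vartheta_i\lambda_i\sum_{j\neq i}q_{ij}(\lambda_j-\lambda_i)=-\tfrac12\sum_{i\neq j}\vartheta_i q_{ij}(\lambda_i-\lambda_j)^2$, which holds because $\vartheta G=\mathbf{0}'$, and which is strictly negative for irreducible $G$ unless $\boldsymbol{\lambda}$ is constant, i.e.\ unless the MMPP is Poisson. With that line added your proof is complete and, on the third-moment claim, more rigorous than the published one.
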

A proof for the case $p=2$ appears in \cite{asanjarani2016queueing} and here we present a general proof.

\begin{proof}
First, note that from \eqref{Eq:Mean} and part (iii) of Lemma~\ref{lem:1} we have 
$
\E[\tilde{N}(t)]=\E[\bar{N}(t)].
$
Then, for the variance, the proof is straightforward by using ~\eqref{Eq:Var} and parts (iii), (vi) and (vii) of  Lemma~\ref{lem:1}.
 
For the third moment, from parts (iii), (iv) and  (i)$^{\prime}$ of Lemma~\ref{lem:1}, 
we see that the first term  in the integral  of Eq. \eqref{Eq:third} is the same for both processes.
The second term in the integral  \eqref{Eq:third} for an MTCP,  can be written as
\begin{align*}
&\int_0^s {\vartheta}\,\text{diag}(\boldsymbol{\lambda}) \left(I_p \otimes \mathbf{1}_2^\prime\right) e^{\bar{Q}(u-s)} \bar{D} \big(e^{\bar{Q}v}-\mathbf{1}_p{\boldsymbol{\pi}} \big) \left(I_p \otimes \mathbf{1}_2\right)\boldsymbol{\lambda}\,dv\\
=&\int_0^s{\vartheta}\,\text{diag}(\boldsymbol{\lambda})\left( \left(I_p \otimes \mathbf{1}_2^\prime\right)e^{\bar{Q}(u-s)}\bar{D}e^{\bar{Q}v}\left(I_p \otimes \mathbf{1}_2\right)-\left(I_p \otimes \mathbf{1}_2^\prime\right)e^{\bar{Q}(u-s)} \bar{D} \mathbf{1}_p{\boldsymbol{\pi}}\left(I_p \otimes \mathbf{1}_2\right)\right)\boldsymbol{\lambda}\,dv,
\end{align*}
 where we  applied parts (iii) and (iv) of Lemma~\ref{lem:1} and the definition of the transient deviation matrix~\eqref{Eq:deviationt}. 
By using  parts (i)$^{\prime}$ and (iv) of Lemma~\ref{lem:1},  we see that second term of this integral is the same for the MMPP and its associated MTCP. 

For the first term, by using  the definition of the matrix exponential $e^{Qt}=\sum_{k=0}^{\infty}\frac{(Qt)^k}{k!}$  and putting aside the common terms or scalars, we have the following term
\begin{equation}\label{Eq:non-equal}
\left(I_p \otimes \mathbf{1}_2^\prime\right)\bar{Q}^k \bar{D}\,\bar{Q}^l\left(I_p \otimes \mathbf{1}_2\right), \quad \forall k,l \geq 0.
\end{equation}
Using  part (i) of  Lemma~\ref{lem:1}, the above term can be written as
$
G^k\left(I_p \otimes \mathbf{1}_2^\prime\right) \bar{D} \left(I_p \otimes \mathbf{1}_2\right) G^l.
$
If we rewrite  $\bar{D}$ and $\tilde{D}$ from  \eqref{Eq:generalQbar} and \eqref{Eq:generalQ} in terms of Kronecker products, we have
\begin{equation}\label{Eq:D-D}
\bar{D}=\bar{Q}+ \left( \text{diag}(\boldsymbol{\lambda})\otimes I_2\right), \qquad \tilde{D}=\text{diag}(\boldsymbol{\lambda}) \otimes\left(\begin{array}{cc}
0& 1 \\
 1 & 0
 \end{array}\right).
\end{equation}
It's easy to verify that $
(I_p \otimes \mathbf{1}_2^\prime ) \tilde{D}  =(I_p \otimes \mathbf{1}_2^\prime)\big(\text{diag}(\boldsymbol{\lambda})\otimes I_2\big)$. Therefore, 
 \begin{equation}\label{eq:Ddiffer}
(I_p \otimes \mathbf{1}_2^\prime ) \tilde{D}  (I_p \otimes \mathbf{1}_2)=(I_p \otimes \mathbf{1}_2^\prime)\big(\text{diag}(\boldsymbol{\lambda})\otimes I_2\big)  (I_p \otimes \mathbf{1}_2). 
\end{equation}
 Comparing the  above equation with \eqref{Eq:D-D} implies that the third moments are not the same and the MTCP has the following extra term
\begin{equation}\label{eq:differ}
 \int_0^s {\vartheta}\,\text{diag}(\boldsymbol{\lambda})G^k(I_p \otimes \mathbf{1}_2^\prime) \bar{Q} \left(I_p \otimes \mathbf{1}_2\right)G^l \boldsymbol{\lambda} dv,
\end{equation}
which cannot be zero unless the process is a Poisson process (see Remark \ref{re:Poi} below).
\end{proof}

 Note that Theorem~\ref{Prop1} only holds  for  slow MMPPs. Otherwise  the construction of a MAP$_{2p}$ from an MMPP$_p$ does not hold  due to some non-positive  elements $\lambda_{i}- S_i$ in  matrices $\bar{\Lambda}_{i}$ and $\bar{D}_i$.  

 \begin{remark}\label{re:Poi}
Only for  the case of Poisson process, the third moments of the slow MMPP$_p$ and its associated  MTCP$_{2p}$ are the same. This result comes from the fact that  \eqref{eq:differ} is equal to zero just for  Poisson processes:
\begin{align*}
\displaystyle
&\,\,\qquad \,{\vartheta}\,\text{diag}(\boldsymbol{\lambda})G^k(I_p \otimes \mathbf{1}_2^\prime) \bar{Q} \left(I_p \otimes \mathbf{1}_2\right)G^l \boldsymbol{\lambda}=0\\
&\Leftrightarrow \quad{\vartheta}\,\text{diag}(\boldsymbol{\lambda})G^{k+1+l}\boldsymbol{\lambda}=0,\,\quad \forall \, k,l\geq0,\\
&\Leftrightarrow \quad{\vartheta}\,\text{diag}(\boldsymbol{\lambda})G^n \boldsymbol{\lambda}=0,\,\quad \forall \, n\geq 1,
\end{align*}
where the first step holds by using part (i) of Lemma~\ref{lem:1} and the fact that $\left(I_p \otimes \mathbf{1}_2^\prime\right)\left(I_p \otimes \mathbf{1}_2\right)=2 I_p$. Since ${\vartheta}$ is the stationary distribution of $G$, the last equality holds when $\boldsymbol{\lambda}= \lambda\mathbf{1}_p$ or 
${\vartheta}\,\text{diag}(\boldsymbol{\lambda})=\lambda \,{\vartheta}$ (which implies again $\boldsymbol{\lambda}= \lambda\mathbf{1}_p$) which is the case for the Poisson process. For more details on when a general MAP is Poisson, see \cite{bean2000map}.
 \end{remark}

 Further, for any initial distribution (not restricting to the time-stationary case), we have the following proposition.
  \begin{proposition}
Let $\tilde{N}(t)$ and $\bar{N}(t)$ be the counting processes of a slow MMPP$_p$ and its associated MTCP$_{2p}$, respectively. Then, these processes have the same first moment but not necessarily the same second moment. 
\end{proposition}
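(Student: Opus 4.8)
The plan is to work with the general, non-time-stationary factorial-moment representation of a MAP counting process. For a MAP$_{2p}$ with generator $Q$, event matrix $D$ and initial distribution $\boldsymbol{\eta}$ on the $2p$ phases, I would use
\[
\E[N(t)] = \int_0^t \boldsymbol{\eta}\,e^{Qs}D\mathbf{1}\,ds, \qquad
\E\big[N(t)(N(t)-1)\big] = 2\int_0^t\!\!\int_0^{s_2} \boldsymbol{\eta}\,e^{Qs_1}D\,e^{Q(s_2-s_1)}D\mathbf{1}\,ds_1\,ds_2,
\]
where $\mathbf{1}$ is the $2p$-vector of ones (these reduce to \eqref{Eq:Mean} and \eqref{Eq:Var} when $\boldsymbol{\eta}=\boldsymbol{\pi}$). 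I apply them to the coupled MAP$_{2p}$ (with $\tilde Q,\tilde D$), whose counting process equals that of the MMPP$_p$, and to the associated MTCP$_{2p}$ (with $\bar Q,\bar D$), both started from the common $\boldsymbol{\eta}$. For the first moment, part (iv) of Lemma~\ref{lem:1} gives $\tilde D\mathbf{1}=\bar D\mathbf{1}=\left(I_p\otimes\mathbf{1}_2\right)\boldsymbol{\lambda}$, and part (i)$^{\prime}$ gives $e^{\tilde Q s}\left(I_p\otimes\mathbf{1}_2\right)=e^{\bar Q s}\left(I_p\otimes\mathbf{1}_2\right)=\left(I_p\otimes\mathbf{1}_2\right)e^{Gs}$. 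Hence both first moments collapse to $\int_0^t\boldsymbol{\nu}\,e^{Gs}\boldsymbol{\lambda}\,ds$, where $\boldsymbol{\nu}:=\boldsymbol{\eta}\left(I_p\otimes\mathbf{1}_2\right)$ is itself a probability vector since $\boldsymbol{\nu}\mathbf{1}_p=\boldsymbol{\eta}\mathbf{1}=1$. This proves $\E[\tilde N(t)]=\E[\bar N(t)]$ for every $\boldsymbol{\eta}$.

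For the second moment, since the first moments agree it suffices to compare the second factorial moments. In each double integral the rightmost factor simplifies by the same identities to $e^{Q(s_2-s_1)}D\mathbf{1}=\left(I_p\otimes\mathbf{1}_2\right)e^{G(s_2-s_1)}\boldsymbol{\lambda}$, so the two integrands differ only through $e^{Q s_1}D\left(I_p\otimes\mathbf{1}_2\right)$. Using the Kronecker decompositions $\bar D=\bar Q+\left(\text{diag}(\boldsymbol{\lambda})\otimes I_2\right)$ and $\tilde D=\text{diag}(\boldsymbol{\lambda})\otimes\left(\begin{smallmatrix}0&1\\1&0\end{smallmatrix}\right)$ from \eqref{Eq:D-D}, together with parts (i) and (i)$^{\prime}$, I would compute $e^{\bar Q s_1}\bar D\left(I_p\otimes\mathbf{1}_2\right)=\left(I_p\otimes\mathbf{1}_2\right)e^{Gs_1}\big(G+\text{diag}(\boldsymbol{\lambda})\big)$ and $e^{\tilde Q s_1}\tilde D\left(I_p\otimes\mathbf{1}_2\right)=\left(I_p\otimes\mathbf{1}_2\right)e^{Gs_1}\text{diag}(\boldsymbol{\lambda})$, whose difference is exactly $\left(I_p\otimes\mathbf{1}_2\right)e^{Gs_1}G$. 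Substituting and using the commutation $e^{Gs_1}G\,e^{G(s_2-s_1)}=G\,e^{Gs_2}$ (all factors are functions of $G$), the inner $s_1$-integral becomes $s_2\,\boldsymbol{\nu}G\,e^{Gs_2}\boldsymbol{\lambda}$, which yields the clean closed form
\[
\E\big[\bar N^2(t)\big]-\E\big[\tilde N^2(t)\big]=2\int_0^t s\,\boldsymbol{\nu}\,G\,e^{Gs}\boldsymbol{\lambda}\,ds.
\]

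Finally I would argue this difference is generically nonzero. A short-time expansion gives $2\int_0^t s\,\boldsymbol{\nu}G e^{Gs}\boldsymbol{\lambda}\,ds=t^2\,\boldsymbol{\nu}G\boldsymbol{\lambda}+O(t^3)$, so it is nonzero for small $t>0$ whenever $\boldsymbol{\nu}G\boldsymbol{\lambda}\neq 0$. Since $G$ is a generator, $G\boldsymbol{\lambda}=0$ forces $\boldsymbol{\lambda}\propto\mathbf{1}_p$ (the Poisson case of Remark~\ref{re:Poi}); for any non-Poisson slow MMPP one has $G\boldsymbol{\lambda}\neq0$, and choosing $\boldsymbol{\eta}$ so that $\boldsymbol{\nu}$ is a point mass on a phase $i$ with $(G\boldsymbol{\lambda})_i\neq0$ gives an explicit initial distribution for which the second moments differ. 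This is consistent with Theorem~\ref{Prop1}: in the time-stationary case $\boldsymbol{\eta}=\boldsymbol{\pi}$, part (ii) of Lemma~\ref{lem:1} yields $\boldsymbol{\nu}=\vartheta$, and $\vartheta G=\mathbf{0}$ makes the difference vanish. The main obstacle is not a deep difficulty but the careful Kronecker bookkeeping needed to push every factor down to the $G$-level and isolate the single residual term $\left(I_p\otimes\mathbf{1}_2\right)e^{Gs_1}G$; the commutativity of $G$ with $e^{Gs}$ is precisely what collapses the double integral to the transparent formula above and makes the comparison decisive.
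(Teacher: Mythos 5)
Your argument is correct, and the part that establishes the failure of second-moment equality takes a genuinely different route from the paper. For the first moment the two proofs are essentially the same modulo representation: the paper writes $\E_{\boldsymbol{\eta}}[N(t)]=\boldsymbol{\eta}D_Q^{\sharp}(t)D\mathbf{1}+\boldsymbol{\pi}D\mathbf{1}t$ and invokes parts (iii) and (vi) of Lemma~\ref{lem:1}, while you use the equivalent integral form $\int_0^t\boldsymbol{\eta}e^{Qs}D\mathbf{1}\,ds$ with parts (iv) and (i)$^{\prime}$; both collapse everything to the $G$-level. The real divergence is in the second moment. The paper works asymptotically: it quotes the linear $y$-intercept $\bar{b}_{\boldsymbol{\eta}}$ of $\Var(N(t))$ from \cite{hautphenne2013second}, shows three of its four terms coincide via Lemma~\ref{lem:1}, and argues (somewhat informally, for a particular $\boldsymbol{\eta}$) that the remaining term $2\boldsymbol{\eta}D_Q^{\sharp}DD_Q^{\sharp}D\mathbf{1}$ differs. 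You instead compute the exact difference of second factorial moments for finite $t$, isolate the single residual factor $\left(I_p\otimes\mathbf{1}_2\right)e^{Gs_1}G$ coming from $\bar D-\tilde D$ via \eqref{Eq:D-D}, and obtain the closed form $2\int_0^t s\,\boldsymbol{\nu}Ge^{Gs}\boldsymbol{\lambda}\,ds$, which you then show is nonzero for small $t$ by the leading term $t^2\boldsymbol{\nu}G\boldsymbol{\lambda}$. Your version buys several things: it is self-contained (no appeal to the external $y$-intercept formula), it is valid at every $t$ rather than only asymptotically, it characterises exactly when equality survives (the Poisson case $G\boldsymbol{\lambda}=0$, matching Remark~\ref{re:Poi}, and the stationary case $\boldsymbol{\nu}=\vartheta$, matching Theorem~\ref{Prop1}), and it supplies a cleaner witness $\boldsymbol{\eta}$ than the paper's $(1,0,\ldots,0)\left(I_p\otimes\mathbf{1}_2^\prime\right)$, which as written does not even sum to one. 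What the paper's route buys in exchange is a statement directly about the asymptotic variance curve, which is the quantity of interest in its queueing applications. All intermediate identities you use (the Kronecker pushes, the commutation $e^{Gs_1}Ge^{G(s_2-s_1)}=Ge^{Gs_2}$, and the simplicity of the zero eigenvalue of the irreducible generator $G$) check out.
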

\begin{proof} 
First consider the generating function of counting process of a MAP which is given by (see Chapter~XI of \cite{asmussen2003applied})
$$
F_{ij}(s,t)=\sum _{n\geq 0} s^n \Pl\Big( N(t)=n, X(t)=j|X(0)=i\Big).
$$
Moreover,  $F(1,t)=e^{Qt}$  and $F(s,t)=e^{(C+Ds)t}$. Therefore, we have
\begin{equation}\label{Eq:moment}
\frac{\partial}{\partial t}F(s,t)=(C+Ds)F(s,t).
\end{equation}
If we put $M_{ij}(t)=\E\left[ N(t)\, \mathbbm{1}_{X(t)=j}| X(0)=i\right]=\frac{\partial}{\partial s}F_{ij}(s,t)\vert_{s=1}$, then, $M(t)=\frac{\partial}{\partial s}F(s,t)\vert_{s=1}$ and
\begin{equation}\label{Eq:M}
\E_{\boldsymbol{\eta}}\left[ N(t) \right]={\boldsymbol{\eta}}M(t)\mathbf{1}.
\end{equation}
From \eqref{Eq:moment} and $M(t)=\frac{\partial}{\partial s}F(s,t)\vert_{s=1}$, we have
$
\frac{d}{d t} M(t)=De^{Qt}+QM(t), {M(0)=0}.
$
The solution of this differential equation is
$$
M(t)= \int_0^te^{Qu}De^{Q(t-u)}du.
$$
So, from \eqref{Eq:M}, the first moment of a non-stationary MAP  in terms of the transient deviation matrix is given by
\begin{equation}\label{eq:firstmom}
\E_{\boldsymbol{\eta}}\left[N(t) \right]= \boldsymbol{\eta} M(t)\mathbf{1}={\boldsymbol{\eta}} D^{\sharp}_Q(t)D\mathbf{1}+{\boldsymbol{\pi}}D\mathbf{1}t.
\end{equation}
 By using parts (iii) and (vi) of Lemma~\ref{lem:1}, we see that the first moments of both MTCP and MMPP counting processes in the non-stationary case are the same. \\ 
To show that the second moments are different, we show that the $y$-intercepts of their asymptotic variance are different.
For a non-stationary MAP, $\Var(N(t))$ has a linear $y$-intercept given by (see  \cite{hautphenne2013second})
$$
\bar{b}_{{\boldsymbol{\eta}} }=-2{\boldsymbol{\pi}}D D_Q^{\sharp}D_Q^{\sharp}D\mathbf{1}_p-2{\boldsymbol{\pi}}D\mathbf{1}_p{\boldsymbol{\eta}}\left( D_Q^{\sharp}\right)^2 D \mathbf{1}_p-\left( {\boldsymbol{\eta}} D_Q^{\sharp}D  \mathbf{1}_p \right)^2 + 2{\boldsymbol{\eta}}D_Q^{\sharp}D D_Q^{\sharp}D \mathbf{1}_p.
$$
 From parts (v$^{\prime}$), (vi) and (vii) of Lemma~\ref{lem:1}, we see that the first three terms of the above $y$-intercept are the same for an MMPP and its associated MTCP. For the last term, let's  consider an initial distribution in the form of ${\boldsymbol{\eta}}=(1, 0, \cdots, 0)\left(I_p \otimes \mathbf{1}_2^\prime\right)$. Then,  by using the definition of deviation matrix \eqref{Eq:deviation} and part (i$^{\prime}$)  of Lemma~\ref{lem:1}, we see that this term for both processes equals  to
 $$
 \int_0^{\infty}(2, 0, \cdots, 0)e^{Gt}\left(I_p \otimes \mathbf{1}_2^\prime\right)    D  D_Q^{\sharp}D \mathbf{1}_p \,dt - \int_0^{\infty}(2, 0, \cdots, 0)\left(I_p \otimes \mathbf{1}_2^\prime\right)\mathbf{1}_p \vartheta D_Q^{\sharp}D \mathbf{1}_p\,dt.
  $$
Note that from part (vi) of  Lemma~\ref{lem:1} we have $D_Q^{\sharp}D \mathbf{1}_p$ is the same for both processes and thus the second integral is the same for both processes. However, different event matrix $D$ for slow MMPPs and their corresponding MTCPs results in different left integral.  
 The explicit calculation shows that the left term in $D_Q^{\sharp}D \mathbf{1}_p$ is $\left(I_p \otimes \mathbf{1}_2\right)$, so from \eqref{Eq:D-D} and \eqref{eq:Ddiffer},  we can conclude that this  term is different for a slow MMPP and its associated MTCP. 
\end{proof}
Note that  in the proof of the last part of the above proposition, for having the same second moment, we need that the initial distribution either satisfies  ${\boldsymbol{\eta}}e^{\bar{Q}t}=0$ (which is the case for the time-stationary distribution) or $\left(I_p \otimes \mathbf{1}_2^\prime\right)\bar{Q} \left(I_p \otimes \mathbf{1}_2\right)=0$ which by considering the structure of $\bar{Q}$, results in  $ \forall i,j$: $q_{ij}=0$. Since the latter is impossible, we conclude that for having the same second moment, we must have $\boldsymbol{\eta}=\boldsymbol{\pi}$.

\section{Illustration on Matching the Inter-Event Time Process}
\label{section:data matching}

We now present a numerical illustration of second order equivalence, focusing on comparing MMPPs and MTCPs for the purpuse of modelling exogenous processes in stochastic models. We take the viewpoint of considering a queue with an MMPP arrival process and approximate it by a queue with an MTCP arrival process. 
We denote $c^2$ as the squared coefficient of variation of the event-stationary inter-event time. Further we denote $d^2$ as the limiting index of dispersion of counts, see \cite{gusella1991characterizing}. Namely:

\begin{equation}
\label{eq:3535}
c^2 = \frac{\Var(T_n)}{\E^2\,[ T_n]}, \qquad d^2 := \lim_{t \to \infty} \frac{\Var(N(t))}{\E[N(t)]}.
\end{equation} 

As a base consider an MMPP$_2$ with parameters $\tilde{q}_{12}=\tilde{q}_{21}=5$, $\lambda_1=10$, $\lambda_2=40$. Applying the formulas in Section~\ref{section MAP} for matrices in \eqref{eq:Dmat} results in 
$M_1=0.04$, $c^2=1.6923$, $M_3=0.0008 $, $\varrho_1=0.1259      $, $\varrho_2=0.0775   $, and $\varrho_3=0.0477\,$.
Say that we now wish to approximate this process by an  MTCP$_4$. How do we choose the parameters of this MTCP$_4$? For this we compare two alternatives. In one alternative we'll seek an MTCP$_4$ that closely resembles the MMPP$_2$ via moments and lag-j autocorrelations. Such a method was put forward in \cite{casale2010trace}. We refer to this alternative as the {\em optimisation method}. The other alternative we consider is to use the approximation method based on Theorem~\ref{Prop1}. We refer to this alternative as {\em second order equivalence} (SOE).

As we show in the comparative results below, the optimization method is only slightly better than SOE with a negligible difference for typical applications. However, keep in mind that carrying out an approximation via our SOE is a straight forward task, while the optimization method requires heavier computational machinery.

For the optimization method we calculate the first 50 autocorrelations of the MMPP$_2$, $\varrho_j$, from \eqref{Eq:rho-j} and minimise the following function
 $$f(\mathbf{x})=\sum_{k=1}^{50}\big(\varrho_j-\hat{\varrho}_k(\mathbf{x})\big)^2,$$
  where $\hat{\varrho}_j(\mathbf{x})$ is the lag-j autocorrelation of  an MTCP$_4$ specified by $\mathbf{x}=(x_1, \cdots, x_{12})$ and having matrices
 \begin{equation}\label{Dx}
  D=\left(\begin{array}{cccc}
 0 & x_1 & x_2 & x_3\\
  x_4&  0 & x_5& x_6\\
  x_7& x_8 &  0 & x_9\\
   x_{10} &   x_{11}&  x_{12} & 0
 \end{array}\right),
 \qquad 
C=\text{diag}(-D\mathbf{1}_4).
 \end{equation}


 Then,  applying the constrained nonlinear minimisation of the ``fmincon'' solver in MATLAB, we find the vector $\mathbf{x}$ that minimizes $f(\mathbf{x})$ with tolerance $10^{-6}$. This result is subject to:
$$\hat{M}_1(\mathbf{x}) = M_1,  \quad \text{and} \quad \hat{c}^2 (\mathbf{x}), \hat{M}_3(\mathbf{x}), \hat{\varrho}_1(\mathbf{x}), \hat{\varrho}_2(\mathbf{x}), \hat{\varrho}_3(\mathbf{x})  
$$
being constrained to their corresponding values for the MMPP$_2$, 
${c}^2, M_3, {\varrho}_1, \varrho_2, \varrho_3,
$
with a tolerance of $10^{-6}$.  

Since this is generally a non-convex optimization problem, we experiment with randomly selected initial values for the ``fmincon'' solver and then seek the optimal one. 

Using the parameters of the resulting MTCP$_4$, the matrices $D$ for both the optimization method and SOE  are respectively given by
{\normalsize
$$
 D_{\mbox{opt}}=\left(\begin{array}{cccc}
 0 & 12.3519 & 1.0078 & 0.9110\\
  9.4306&  0 & 1.2491 & 3.4953\\
  3.3339 & 2.9924 &  0 & 39.2952\\
   3.4383  &   3.5185 &  39.4102 & 0
 \end{array}\right)
 \qquad \text{and} \qquad
 D_{\mbox{{\footnotesize SOE}}}=\left(\begin{array}{cccc}
 0 & 5 & 5 & 0\\
  5&  0 & 0 & 5\\
  5& 0 &  0 & 35\\
   0  &   5 &  35 & 0
 \end{array}\right).
$$
}
Note that as opposed to $D_{\mbox{opt}}$,  the matrix $D_{\mbox{{\footnotesize SOE}}}$ is obtained immediately from Eq.~\eqref{Eq:MTCPQ}.
%
%
In Table~\ref{Tab:comp}, we present  a comparison of  the moments and first three autocorrelation values for an MMPP$_2$ and its corresponding MTCP$_4$. The  MTCP$_4$ is obtained through both the optimisation method (based on the inter-event process presented in \cite{casale2010trace}), and
 our SOE method.
As the table shows, the corresponding MTCP$_4$ obtained from  our SOE has the same first moment as the MTCP$_4$ resulting from the optimisation procedure. This comes from the fact that the first moments for both time-stationary counting process  and event-stationary inter-event process  are the same ($
M_1=\frac{1}{\boldsymbol{\pi} D \mathbf{1}_4}= \boldsymbol{\alpha} (-C)^{-1} \mathbf{1}_4
$).

 \begin{table}[h] 
 \centering
      \caption{{{\small Comparison of moments and autocorrelations of an MMPP$_2$ with associated MTCP$_4$. The parameters of the MMPP$_2$ are $q_{12}=q_{21}=5$, $\lambda_1=10$, and $\lambda_2=40$.}} } 
        \begin{tabular}{| c | c | c | c | c | c | c | c |}
    \hline
    Model & $M_1$ & $d^2$ &$c^2$ & M$_3$  & $\varrho_1$ & $\varrho_2$ & $\varrho_3$ \\
    \hline
       MMPP$_2$ ({\footnotesize  taken as ground truth})& 0.0400 & 2.8000& 1.6923& 0.0008 & 0.1259    &0.0775  &  0.0477\\
    \hline
     MTCP$_4$ ({\footnotesize  optimisation method}) & 0.0400 &2.7933& 1.6923& 0.0008 &  0.1256  &  0.0771&    0.0473 \\
     \hline
     MTCP$_4$ ({\footnotesize  SOE}) & 0.0400 & 2.8000& 2.1250& 0.0013 &   0.0993   & 0.0372 &    0.0140 \\
     \hline
    \end{tabular}  
    \label{Tab:comp}
 \end{table} 

As we expect, the value of $d^2$ which is related to the counting processes are exactly the same for the given MMPP$_2$ and the corresponding MTCP$_4$ that resulted from our SOE method. Based on Table~\ref{Tab:comp},
 the MMPP$_2$ and the MTCP$_4$ resulting from optimisation method are of comparable moments and autocorrelations (their differences are less than $10^{-3}$). But, as expected, this is not the case for  the MTCP$_4$ obtained from our SOE method. 

 \begin{figure}[h!]
  \centering
  \begin{subfigure}{0.65\textwidth}
    \includegraphics[scale=0.5]{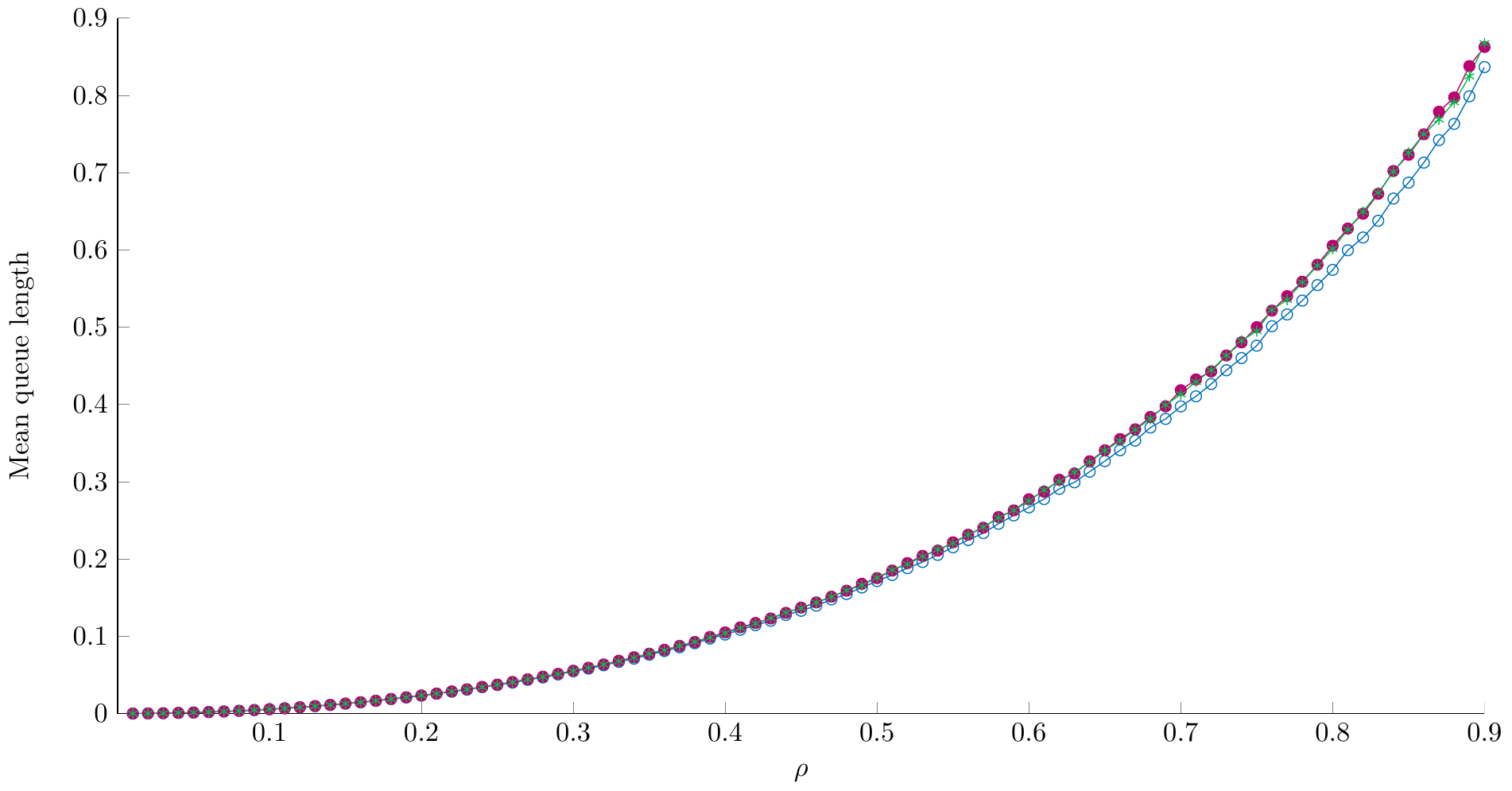}
    \includegraphics[]{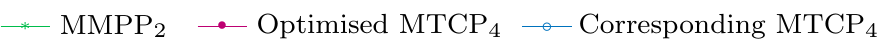}
    \caption{Comparison of mean queue length.}
    \label{Fig:meanqueue}
  \end{subfigure}
   \begin{subfigure}{0.65\textwidth}
    \includegraphics[scale=0.5]{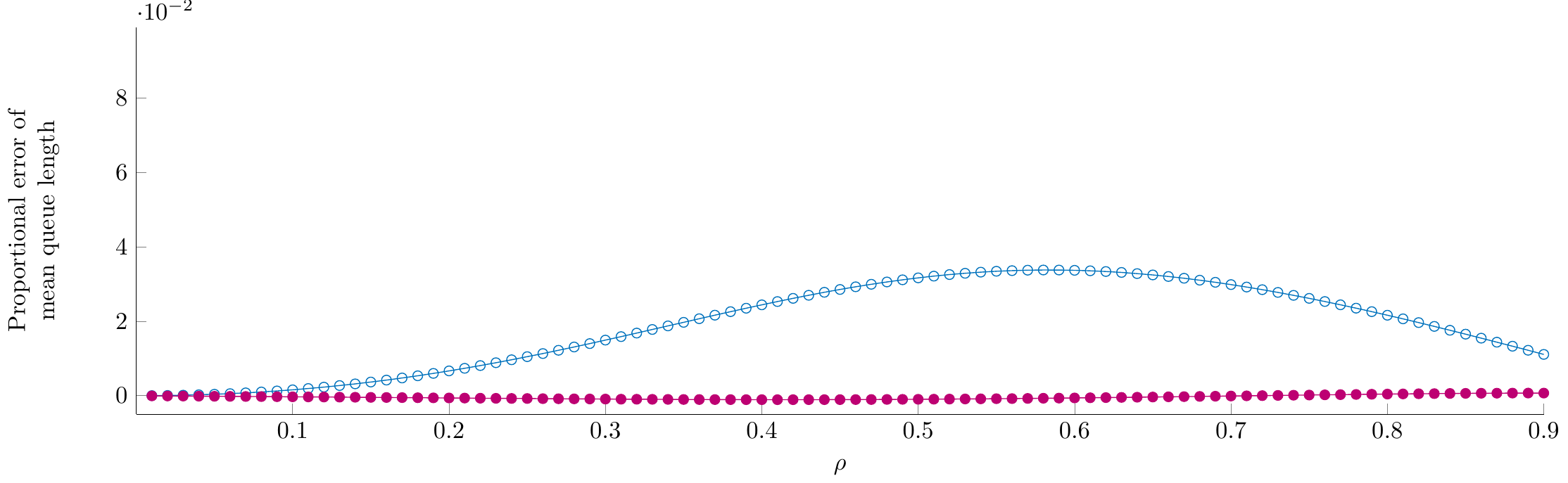}
\includegraphics[]{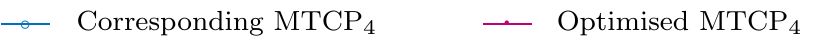}
    \caption{Comparison of proportional error of mean queue length.}
\label{Fig:PropErr}
  \end{subfigure}
  \begin{subfigure}{0.65\textwidth}
   \includegraphics[scale=0.5]{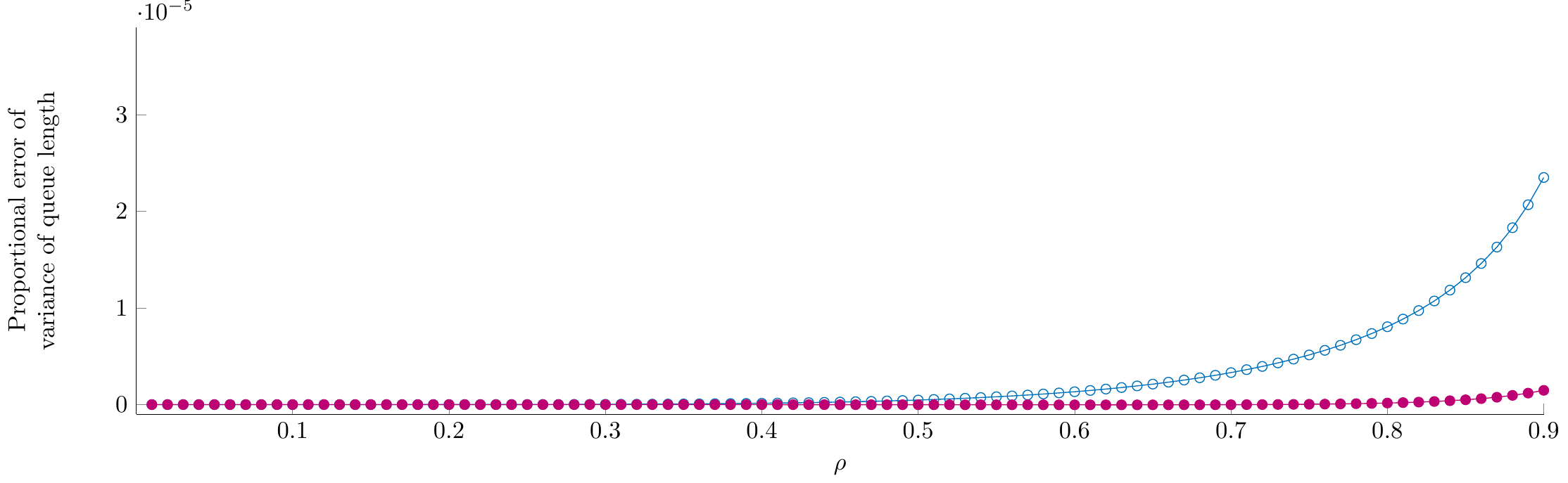}
\includegraphics[]{capErr.pdf}
    \caption{Comparison of proportional error of variance of queue length.}
   \label{Fig:PropErrVar}
  \end{subfigure}
  \caption{Performance comparison for two alterantive MTCP$_4$/M/1 approximations of the queueing system. The optimization method and the SOE (corresponding MTCP). Here, the workload varies from 0 to 0.9.}
\end{figure}

To see the effect on an example stochastic model, we now consider a queueing system with  matrix analytic analysis. Assume that we have an  MMPP$_2$/M/1 queue and we wish to approximate it by an MTCP$_4$/M/1 queue. Consider the MTCP obtained through the optimization method and through the SOE method and compare how each of these arrival process approximations behave in terms of queue performance analysis. 

Figure~\ref{Fig:meanqueue} demonstrates differences between the mean queue length curves of the 
   MMPP$_2$ /M/1 queue (green curve) and the  MTCP$_4$/M/1 queue. The MTCP$_4$ obtained either  from the optimisation method (red curve) or from our SOE (blue curve). Note that the value of workload $\rho$ varies from $0$ to $0.9$ by changing the  mean of the service time. 
 Furthermore, the resulting mean queue lengths in Figure~\ref{Fig:meanqueue} are calculated by considering the MAP/M/1 queue as a QBD and then using the SMC solver, see \cite{asanjarani2016queueing}  for more details. The same method is used to obtain the corresponding measures for Figure~\ref{Fig:PropErr} and  Figure~\ref{Fig:PropErrVar}.
 
Figure~\ref{Fig:PropErr} presents a comparison of the proportional error of mean queue length of a steady-state  MMPP$_2$/M/1 queue where the MMPP$_2$ is approximated by an MTCP$_4$. This MTCP$_4$ is obtained from either the optimization method (red curve) or our proposed SOE  (blue curve). As the figure shows, this error is zero and by increasing the value of $\rho$ increases to at most $3.5\times 10^{-2}$ and then again decreases.
Hence, while the MTCP$_4$ resulting from optimisation procedure results in closer results to the original model, the proportional error of approximating with the MTCP$_4$ resulting from the (simpler) SOE is  negligible too.  
To reinforce this observation, we consider  the proportional error in measuring another important performance measure, the variance of queue length where approximating an MMPP$_2$/M/1 queue with an MTCP$_4$/M/1 queue. Figure~\ref{Fig:PropErrVar} demonstrates the proportional error of variance of queue length when approximating an  MMPP$_2$/M/1 queue with an MTCP$_4$/M/1 queue. The blue curve is obtained by considering the SOE in Section~ \ref{section:MMPP-MTCP} and the red curve is obtained by considering the optimisation procedure for finding the corresponding MTCP$_4$. In both cases by changing the service rate, the value of workload $\rho$ varies from $0$ to $0.9$. As  Figure~\ref{Fig:PropErrVar} shows,  the proportional error of variance with our SOE is less than $3 \times 10^{-5}$.

Therefore, regarding the simplicity of finding the associated MTCP$_4$ suggested in Section~ \ref{section:MMPP-MTCP} and negligibility of the proportional error of mean queue and variance of queue length, we can suggest that our method is suitable for approximating an MMPP$_2$ with an MTCP$_4$ in queueing applications.


\section{Conclusion and Open Problems}
\label{section:conclusion}
 For the two extreme examples of MAPs, the MMPP (where there is no event at transitions of the underlying CTMC) and the MTCP (where all transitions of the underlying CTMC are accompanied with an event), we have shown that the first two moments of counts for a large class of MMPPs (slow MMPPs) and MTCPs are the same. Therefore, from a modelling point of view, one can construct an MMPP from a given MTCP, and for a given slow MMPP  there is an MTCP with the same first moments of counts. A question that arises is whether there is any such similar relation between the class of MMPPs and the class of MTCPs? In other words, can we find a relation between non-slow MMPPs and MTCPs (or maybe another class of MAPs)? 
 
 We also carried out parameter estimation for these MAPs. We consider parameters of a slow MMPP$_2$ and use the  nonlinear optimisation procedure presented in \cite{casale2010trace} to  estimate  parameters of an MTCP$_4$ with the same first moments and  autocorrelations of the inter-event time process. Comparing the results of this method with  parameters resulting from matching the first two moments of counts in the context of a queueing model in \cite{asanjarani2016queueing} shows that the differences between these results are negligible. There are some papers related to fitting data with a MAP, see for instance \cite{okamura2016fitting} and references therein. But, as time-homogeneity of the data trace is a significant property in available fitting methods, a question that arises here for future work is  what is the best fitting method if the real data trace is non-homogeneous?



{\Large \textbf{{Appendix}}}\\

\textbf{Proof of Lemma~\ref{lem:1}}
The proof of (i) follows from the mathematical induction and the  structure of  matrices in \eqref{Eq:generalQbar} and \eqref{Eq:generalQ}. Note that for $k=0$, the relations are obvious and for $k=1$, it is easy to check that multiplying both $\tilde{ Q}$ and $\bar{Q}$ from the left by $I_p \otimes \mathbf{1}_2^\prime$ gives the same result. Further, for instance, if we consider that $(I_p \otimes \mathbf{1}_2^\prime)\tilde{Q}^{k}=G^k (I_p \otimes \mathbf{1}_2^\prime)$, then, for $k+1$ we have:
\begin{align*}
(I_p \otimes \mathbf{1}_2^\prime)\tilde{Q}^{k+1}&=\Big( \left(I_p \otimes \mathbf{1}_2^\prime\right)\tilde{Q}^k\Big) \tilde{Q}\\
\text{(by induction assumption)}\quad&=\Big( G^k(I_p \otimes \mathbf{1}_2^\prime)\Big)\tilde{Q}= G^k \Big( (I_p \otimes \mathbf{1}_2^\prime)\tilde{Q}\Big)\\
\text{(by induction assumption)}\quad &= G^k \Big(G (I_p \otimes \mathbf{1}_2^\prime)\Big)= G^{k+1} (I_p \otimes \mathbf{1}_2^\prime).
\end{align*}
The other relations can be proved similarly.\\
 The proof of (i)$^{\prime}$ is a consequence of (i) and  the definition of the matrix exponential $e^{Qt}=\sum_{k=0}^{\infty}\frac{(Qt)^k}{k!}$. 
 \\
For part (ii), we need to show that ${\boldsymbol{\pi}}{\tilde{Q}}= {\boldsymbol{\pi}}\bar{Q}=0$ and ${\boldsymbol{\pi}}\mathbf{1}_p= 1$. First,  from part (i), we have
$$
{\boldsymbol{\pi}} {\tilde{Q}}=\frac{1} {2}{\vartheta} \left(I_p \otimes \mathbf{1}_2^\prime\right){\tilde{Q}} =\frac{1} {2}{\vartheta}\, G \left(I_p \otimes \mathbf{1}_2^\prime\right)=0,
$$
where the last equality comes  from the fact that ${\vartheta}$ is the stationary distribution of $G$. Moreover, we have  
$$\begin{array}{lll}
{\boldsymbol{\pi}}\mathbf{1}_p= \frac{1} {2}{\vartheta} \left(I_p \otimes \mathbf{1}_2^\prime\right)\mathbf{1}_p
= \frac{1} {2}{\vartheta} \left(\begin{array}{ccc}
2\\ \vdots \\2
 \end{array}\right)
={\vartheta}\, \mathbf{1}_p
=1,
\end{array}
$$
where  again, for the last equality we use the fact that ${\vartheta}$ is a distribution.   The  proof for the case of $\bar{Q}$ follows the same lines. \\
 By considering  part (ii) and the structure of matrices $\bar{D}$ and  ${\tilde{D}}$ in  \eqref{Eq:generalQbar} and \eqref{Eq:generalQ},  proofs of (iii) and (iv) are obvious.\\
 For  part (v), we  use the definition of the deviation matrix  in \eqref{Eq:deviation} as follows
\begin{align*}
\left(I_p \otimes \mathbf{1}_2^\prime\right)D_{\tilde{Q}}^{\sharp}\left(I_p \otimes \mathbf{1}_2\right)
&=\int_0^{\infty}\left(I_p \otimes \mathbf{1}_2^\prime\right)(e^{\tilde{Q}u}-\mathbf{1}_p{\boldsymbol{\pi}})\left(I_p \otimes \mathbf{1}_2\right)\, du\\
&= \int_0^{\infty}\left(\left(I_p \otimes \mathbf{1}_2^\prime\right)e^{\tilde{Q}u}\left(I_p \otimes \mathbf{1}_2\right)-\left(I_p \otimes \mathbf{1}_2^\prime\right)  \mathbf{1}_p{\boldsymbol{\pi}}   \left(I_p \otimes \mathbf{1}_2\right)\right)\, du\\
\big(\text{by part}\, (i)^\prime \text{and part}\, (ii)\big)\quad
&= \int_0^{\infty}\left(e^{Gu}\left(I_p \otimes \mathbf{1}_2^\prime\right)\left(I_p \otimes \mathbf{1}_2\right)-\left(I_p \otimes \mathbf{1}_2^\prime\right)  \mathbf{1}_p\,\frac{1} {2}\vartheta \left(I_p \otimes \mathbf{1}_2^\prime\right)   \left(I_p \otimes \mathbf{1}_2\right)\right)\, du\\
&= 2 \int_0^{\infty}(e^{Gu}-\mathbf{1}_p{\vartheta})\, du\\
&=2 D_G^{\sharp}\,.
\end{align*}
In the proof, we used the formula $(A\otimes B)(C\otimes D)=(AC\otimes BD)$  (when matrix dimensions agree for the multiplication) to show that $
(I_p \otimes \mathbf{1}_2^\prime)(I_p \otimes \mathbf{1}_2)= (I_p \otimes 2)=2I_p.
$
Then, since  $(I_p \otimes \mathbf{1}_2^\prime) \mathbf{1}_p=2\, \mathbf{1}_p $, we have the result.\\
   The proof for the case of $\bar{Q}$ follows the same lines and the proof of (v)$^{\prime}$ is a corollary of the proof of part (v). Proofs of   (vi) and (vii) are the results of  parts (iii), (iv) and (v)$^{\prime}$.\\
   %
For  part (viii),  from   \eqref{Eq:deviation} and \eqref{Eq:deviationt},  we have: 
$$\begin{array}{lll}
\left(I_p \otimes \mathbf{1}_2^\prime\right)D_{\tilde{Q}}^{\sharp}D_{\tilde{Q}}^{\sharp}(t)\left(I_p \otimes \mathbf{1}_2\right)
= \int_0^{\infty} \int_0^{t} \left(I_p \otimes \mathbf{1}_2^\prime\right) (e^{\tilde{Q}(u+v)}- \mathbf{1}_p{\boldsymbol{\pi}})\left(I_p \otimes \mathbf{1}_2\right) \,dv\,du \\
\hspace{47.5mm}=2 \int_0^{\infty} \int_0^{t} (e^{G(u+v)}-\mathbf{1}_p{\vartheta})\,dv\,du\\
\hspace{47.5mm}=2 D_G^{\sharp} D_G^{\sharp}(t),  
\end{array}$$ 
where in addition to  previous relations, in the first step  we use the fact that for any transition rate matrix $Q$ with stationary distribution ${\boldsymbol{\pi}}$, we have
 $$e^{Qt}\mathbf{1}_p= \mathbf{1}_p \text{ (since }\,e^{Qt} \, \text{is stochastic}),\qquad{\boldsymbol{\pi}}e^{Qt}={\boldsymbol{\pi}}. $$
 Further, we have $
\mathbf{1}_p \vartheta\,\mathbf{1}_p \vartheta=\mathbf{1}_p \vartheta 
$.
The proof for $\bar{Q}$ follows  the same lines.
\end{document}